\Crefname{ALC@unique}{Line}{Lines} 
\crefname{hypothesis}{Hypothesis}{Hypotheses}
\title{Multilateration of Random Networks with Community Structure \thanks{Submitted to the editors DATE.
\funding{This work has been partially funded by the NSF IIS grant No. 1836914.}}}
\author{Richard C. Tillquist\thanks{Department of Computer Science, University of Colorado, Boulder.}
\and Manuel E. Lladser\thanks{Department of Applied Mathematics, University of Colorado, Boulder. Corresponding author.} \email{(manuel.lladser@colorado.edu)}
}
\DeclareMathOperator{\diag}{diag}
\newcommand{\Comments}{1}
\newcommand{\mynote}[2]{\ifnum\Comments=1\textcolor{#1}{#2}\fi}
\newcommand{\mytodo}[2]{\ifnum\Comments=1%
  \todo[linecolor=#1!80!black,backgroundcolor=#1,bordercolor=#1!80!black]{#2}\fi}
\newcommand{\E}{\mathbb{E}}
\newcommand{\K}{\mathcal{K}}
\newcommand{\NN}{\mathbb{N}}
\newcommand{\kbf}{\mathbf{k}}
\newcommand{\Prob}{\mathbb{P}}
\newcommand{\SBM}{{\hbox{SBM}}}
\newcommand{\xbf}{\mathbf{x}}
\newcommand{\ybf}{\mathbf{y}}
\def\Abf{{\mathbf{A}}}
\def\Dbf{{\mathbf{D}}}
\def\diam{{\hbox{diam}}}
\def\Mbf{{\mathbf{M}}}
\begin{document}

\maketitle

\begin{abstract}
The minimal number of nodes required to multilaterate a network endowed with geodesic distance (i.e., to uniquely identify all nodes based on shortest path distances to the selected nodes) is called its \emph{metric dimension}. This quantity is related to a useful technique for embedding graphs in low-dimensional Euclidean spaces and representing the nodes of a graph numerically for downstream analyses such as vertex classification via machine learning. While metric dimension has been studied for many kinds of graphs, its behavior on the Stochastic Block Model (SBM) ensemble has not. The simple community structure of graphs in this ensemble make them interesting in a variety of contexts. Here we derive probabilistic bounds for the metric dimension of random graphs generated according to the SBM, and describe algorithms of varying complexity to find---with high probability---subsets of nodes for multilateration. Our methods are tested on SBM ensembles with parameters extracted from real-world networks. We show that our methods scale well with increasing network size as compared to the state-of-the-art Information Content Heuristic algorithm for metric dimension approximation.
\end{abstract}

\begin{keywords}
Erd\"os-R\'enyi model, graph embedding, metric dimension, multilateration, resolving set, stochastic block model
\end{keywords}

\begin{AMS}
05C80, 68R10, 68W25, 90B15
\end{AMS}

\section{Introduction}

In the Euclidean plane any set of three affinely independent points is enough to perform \emph{trilateration}; in particular, every point in the plane can be uniquely identified by its distances to such a set. This idea may be extended to arbitrary metric spaces, and specifically to nodes in a graph, as follows.

Let $G=(V,E)$ be a graph. Throughout this manuscript we think of $\Dbf$ as the \textit{distance matrix} and of $\Abf$ as the \textit{adjacency matrix} of $G$ and denote the \textit{diameter} of $G$ as $\diam(G)$. A set $R \subseteq V$ is called \emph{resolving} if for all $u,v \in V$, with $u\ne v$, there is at least one $r \in R$ such that $\Dbf(u,r) \neq \Dbf(v,r)$. In this case, $R$ is said to \textit{multilaterate} or \textit{resolve} $G$ as distances to these nodes are enough to uniquely distinguish all nodes. 

The ability to resolve graphs in this way is relevant in a variety of settings, including robot navigation~\cite{khuller1996landmarks}, network discovery and verification~\cite{beerliova2006network}, the Mastermind game~\cite{chvatal1983mastermind}, and diffusions over networks~\cite{spinelli2016observer}. More recently, graph embeddings based on resolving sets have been applied to representing biological sequence data in a way amenable to machine learning classifiers~\cite{tillquist2019low}. Indeed, if $R$ resolves $G$ then any node in $G$ may be embedded into the $|R|$-dimensional Euclidean space using the map $\Phi_R(v):=(\Dbf(v,r))_{r\in R}$, for $v\in V$. Finding small resolving sets is therefore of great interest for a concise numerical representation of nodes in a graph, which, in turn, improves the efficiency of algorithms exercising this representation.

The size of a minimal resolving set in a graph $G$, denoted $\beta(G)$, is called its \emph{metric dimension}~\cite{harary1976metric, slater1975leaves}. Determining the metric dimension of arbitrary graphs is an NP-complete problem~\cite{gary1979computers,khuller1996landmarks}. Nevertheless, exact formulae or bounds are known for a variety of graph families~\cite{chartrand2000resolvability, harary1976metric, slater1975leaves, tillquist2019low} as well as for random graph ensembles, particularly random forests~\cite{mitsche2015limiting} and the Erd\"os-R\'enyi model~\cite{babai1980random, bollobas2012metric}. While some of these results are constructive, it is often the case that accurate estimates of metric dimension can be achieved without exhibiting resolving sets explicitly. In particular, these works do not necessarily elucidate how to multilaterate a given graph.

A number of approximation and heuristic algorithms have been developed for finding resolving sets---though not necessarily minimal---in general graphs~\cite{kratica2009computing,mladenovic2012variable}. The Information Content Heuristic (ICH) is among the most widely used of these methods as it guarantees an approximation ratio of $1 + (1 + o(1))\cdot\ln|V|$, the best possible for metric dimension~\cite{hauptmann2012approximation}. Nevertheless, with a time complexity of $O(|V|^3)$ for known $\Dbf$, this algorithm is prohibitively expensive when applied to large networks. This motivates us to focus on networks generated according to specific random graph models. By taking advantage of highly likely graph properties in these networks, it may be possible to design approximation algorithms with improved average efficiency.

The Stochastic Block Model (SBM) is a generative graph model that serves as the fundamental method by which graphs with simple community structure are modeled. It has been studied extensively with respect to community detection and recovery both theoretically~\cite{abbe2017community,abbe2016exact,abbe2015community,karrer2011stochastic}, and with respect to social interactions~\cite{carley2005special,traud2012social}, gene expression~\cite{cline2007integration,jiang2004cluster}, and recommender systems~\cite{linden2003amazon,sahebi1997community}.

Let $V=\{1,\ldots,n\}$ and $C$ be a partition of $V$ into $c$ subsets $V_1, \cdots, V_c$, representing communities. In this manuscript, we assume $c\ge1$ to be fixed and finite. Let $P$ be a $(c \times c)$ symmetric matrix with entries $0 \leq P_{i,j} \leq 1$ for $i,j\in\{1,\ldots,c\}$. A simple graph $G=(V,E)$ with $n$ vertices is said to be generated by the SBM with parameters $C$ and $P$, in which case we write $G\sim SBM(n;C,P)$, when for $u \in V_i$ and $v \in V_j$ with $u \neq v$, the probability that $\{u,v\}\in E$ is $P_{i,j}$.

In this manuscript, we address the resolvability of single and multi-community SBMs. Our work is based on a careful and novel examination of adjacency information only. Such an approach guarantees an upper bound for metric dimension and, for many regimes of the SBM, is a good approximation of full distance information.  With one community, the SBM is equivalent to the Erd\"os-Reny\'i random graph model, i.e. $G \sim G_{n,p}$. We determine highly likely resolving sets for $G$ and improve an asymptotic formula for $\beta(G)$ in this case. When $G \sim SBM(n;C,P)$ has multiple communities, we propose several algorithms to determine subsets $R\subset V$ that resolve $G$ with high probability. Estimating $C$ and $P$ from various real-world networks, we find that, while the ICH algorithm discovers smaller resolving sets, our algorithms are substantially faster, making them practical on large networks.

\section{Adjacency Matrix}
\label{sec:modadjmat}

In determining the metric dimension of a graph $G$ the primary object of interest is its distance matrix, and especially off-diagonal entries. These entries can nevertheless be highly dependent on one another. Indeed, for nodes $u\ne v$ in the same connected component, and $w$ a neighbor of $v$, $\Dbf(u,w) = \Dbf(u,v) - 1$ if $w$ is on a shortest path from $u$ to $v$, $\Dbf(u,w) = \Dbf(u,v) + 1$ if $v$ is on a shortest path from $u$ to $w$, and $\Dbf(u,w) = \Dbf(u,v)$ otherwise. 

Note, however, that for $u\ne v$, $\Dbf(u,v)=1$ if and only if $\Abf(u,v)=1$, and $\Dbf(u,v)\ge2$ if and only if $\Abf(u,v)=0$. In particular, when $\diam(G)\le2$, the off-diagonal entries of $\Dbf$ are (up to a relabelling) in a one-to-one correspondence with the off-diagonal entries of $\Abf$. Thus, in graphs with a diameter at most 2, one should be able to determine metric dimension from direct examination of adjacency, as opposed to full distance, information. To carry forward this idea, we extend the notion of metric dimension of a graph to a matrix. 

Following~\cite{tillquist2019low}, we define the metric dimension $\beta(\Mbf)$ of an arbitrary matrix $\Mbf$ as the smallest number of columns required to make every row unique. (If $\Mbf$ has at least two identical rows, $\beta(\Mbf):=+\infty$.) Accordingly, we say that a set $R$ of columns resolves $\Mbf$ if the rows of the latter are unique when only considering the columns in $R$. In particular, $\beta(\Mbf)$ is the size of a minimal resolving set of columns in $\Mbf$. Note that in the context of graphs, the metric dimension of the distance matrix is the same as that of the graph itself.

Our next result applies to arbitrary graphs, directed or not, with or without loops, and of any diameter.

\begin{lemma}
\label{lem:Dmin2}
Let $G=(V,E)$ be a graph and define $\Abf^*:=\Abf+\diag(2)$, where $\diag(2)$ is a diagonal matrix with the same dimensions as $\Abf$ and 2's along its diagonal. If $R$ resolves $\Abf^*$ then it also resolves $\Dbf$, and if $R$ resolves $\Abf$ then it resolves $\Abf^*$; in particular, $\beta(\Dbf)\le\beta(\Abf^*)\le\beta(\Abf)$.
\end{lemma}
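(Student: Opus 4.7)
The plan is to establish the two set-theoretic implications separately, after which the chain $\beta(\Dbf)\le\beta(\Abf^*)\le\beta(\Abf)$ follows by specializing to minimum resolving sets. The guiding observation is that, off the diagonal, $\Abf^*$ and $\Abf$ agree, so both record whether a given landmark is a neighbor; the ``$+\diag(2)$'' shift only alters entries $\Abf^*(r,r)$, and its role is to keep a node distinguishable from itself when $r$ is used as a landmark. This mirrors the fact that $\Dbf(r,r)=0$ while $\Dbf(v,r)\ge 1$ for $v\ne r$.

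For the implication ``$R$ resolves $\Abf^*$ $\Rightarrow$ $R$ resolves $\Dbf$'', I would fix $u\ne v$, choose $r\in R$ with $\Abf^*(u,r)\ne\Abf^*(v,r)$, and split on whether $r\in\{u,v\}$. If $r\notin\{u,v\}$, then $\Abf(u,r)\ne\Abf(v,r)$, so exactly one of $u,v$ is a neighbor of $r$; that one has distance $1$ to $r$, and the other has distance at least $2$ (or $+\infty$ if disconnected), yielding $\Dbf(u,r)\ne\Dbf(v,r)$. If $r=u$, then $\Dbf(u,r)=0$ but $\Dbf(v,r)\ge 1$; the case $r=v$ is symmetric. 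Here the hypothesis on $\Abf^*$ is used only to produce $r$; the witness that $\Dbf(u,r)\ne\Dbf(v,r)$ actually works at every $r\in\{u,v\}$ automatically.

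For the implication ``$R$ resolves $\Abf$ $\Rightarrow$ $R$ resolves $\Abf^*$'', the argument is nearly identical. Off the diagonal the two matrices agree, so a separating landmark $r\notin\{u,v\}$ carries over directly. When the landmark coincides with $u$ or $v$, one checks numerically that the added $2$ forces $\Abf^*(u,u)\in\{2,3\}$ to exceed $\Abf^*(v,u)\in\{0,1\}$, still separating the two rows.

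No step is a serious obstacle; the only subtlety is that the argument must accommodate both graphs with loops (so $\Abf(u,u)$ may be $0$ or $1$) and disconnected graphs (so entries of $\Dbf$ may be $+\infty$). In each boundary case the numerics are immediate, and the conclusion is unaffected. Once both implications are in hand, applying them to a minimum resolving set of $\Abf$ and of $\Abf^*$ respectively yields the claimed inequalities on metric dimensions.
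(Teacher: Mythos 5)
Your proposal is correct and follows essentially the same route as the paper's proof: an elementary case analysis using that $\Abf$ and $\Abf^*$ agree off the diagonal, that the $+\diag(2)$ shift (like $\Dbf(r,r)=0$) automatically separates a landmark's own row, and that off-diagonal adjacency values translate into distance $1$ versus distance $\ge 2$. The only cosmetic difference is that you organize the cases by whether $r\in\{u,v\}$, while the paper assumes WLOG $u\ne r$ and splits on the values of $\Abf(u,r)$ and $\Abf^*(v,r)$; both handle loops and disconnectedness correctly.
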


\begin{proof}
Consider $u,v\in V$ such that $u\ne v$. If $R$ resolves $\Abf^*$ then there is $r\in R$ such that $\Abf^*(u,r)\ne\Abf^*(v,r)$. Assume without loss of generality that $u\ne r$; in particular, $\Abf^*(u,r)=\Abf(u,r)$. If $\Abf(u,r)=0$ then $\Dbf(u,r)>1$ and $\Abf^*(v,r)\ge1$. If $\Abf^*(v,r)=1$ then $\Dbf(v,r)=1<\Dbf(u,r)$, whereas if $\Abf^*(v,r)\ge2$ then $\Dbf(v,r)=0<\Dbf(u,r)$. On the other hand, if $\Abf(u,r)=1$ then $\Dbf(u,r)=1$ and $\Abf^*(v,r)=0$ or $\Abf^*(v,r)\ge2$. If $\Abf^*(v,r)=0$ then $\Dbf(v,r)>1=\Dbf(u,r)$, whereas if $\Abf^*(v,r)\ge2$ then $\Dbf(v,r)=0<1=\Dbf(u,r)$. In either case, $\Dbf(v,r)\ne\Dbf(u,r)$, which implies that $R$ resolves $\Dbf$; in particular, $\beta(\Dbf)\le\beta(\Abf^*)$.

Suppose next that $R$ resolves $\Abf$; in particular, if $u\ne v$ then there is $r\in R$ such that $\Abf(u,r)\ne\Abf(v,r)$. Without loss of generality assume that $u\ne r$. If $\Abf(u,r)=1$ and $\Abf(v,r)=0$ then $\Abf^*(u,r)=1$ and $\Abf^*(v,r)\in\{0,2\}$ according to whether $v=r$ or $v\ne r$. Similarly, if $\Abf(u,r)=0$ and $\Abf(v,r)=1$ then $\Abf^*(u,r)=0$ and $\Abf^*(v,r)\in\{1,3\}$. Since in either case $\Abf^*(u,r)\ne\Abf^*(v,r)$, $R$ also resolves $\Abf^*$, implying that $\beta(\Abf^*)\le\beta(\Abf)$.
\end{proof}

When dealing with large graphs, Lemma~\ref{lem:Dmin2} offers a comparatively low complexity alternative to find resolving sets. Indeed, generating the adjacency matrix of a graph requires $O(|V|^2)$ time, while determining full distance information generally assumes access to adjacency information and requires between $O(|V||E|)$~\cite{thorup1999undirected} and $O(|V|^3)$~\cite{floyd1962algorithm} time depending on the nature of the graph and its edge weights.

The next result is tailored for our discussion about the SBM ensemble in the following section.

\begin{corollary}
If $G=(V,E)$ is a graph without loops and with $\diam(G)\le2$ then $\beta(\Dbf)=\beta(\Abf^*)$.
\label{cor:DA*}
\end{corollary}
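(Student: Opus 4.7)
The plan is to exploit the fact that under the hypotheses $\diam(G)\le 2$ and ``no loops,'' the distance and adjacency values can take only three possible numerical values, and these are in bijection. Lemma~\ref{lem:Dmin2} already gives $\beta(\Dbf)\le\beta(\Abf^*)$, so the whole game is to establish the reverse inequality $\beta(\Abf^*)\le\beta(\Dbf)$.

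First I would observe that since $\diam(G)\le 2$ forces $G$ to be connected, the distance matrix takes values in $\{0,1,2\}$ only: $\Dbf(u,v)=0$ iff $u=v$, $\Dbf(u,v)=1$ iff $u,v$ are adjacent (using that there are no loops, so $\Abf(u,u)=0$), and $\Dbf(u,v)=2$ iff $u\ne v$ and $\{u,v\}\notin E$. Meanwhile, $\Abf^*$ has diagonal entries equal to $2$ and off-diagonal entries in $\{0,1\}$. Comparing cases one sees that, under the stated hypotheses, the identity $\Abf^*(u,r)=2-\Dbf(u,r)$ holds for every pair of vertices $u,r$.

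The punch line is then immediate: the map $x\mapsto 2-x$ is a bijection on $\{0,1,2\}$, so for any column subset $R\subseteq V$ and any two rows $u\ne v$, the rows $(\Dbf(u,r))_{r\in R}$ and $(\Dbf(v,r))_{r\in R}$ coincide if and only if the rows $(\Abf^*(u,r))_{r\in R}$ and $(\Abf^*(v,r))_{r\in R}$ coincide. In particular, any set $R$ resolving $\Dbf$ also resolves $\Abf^*$, which yields $\beta(\Abf^*)\le\beta(\Dbf)$. Together with Lemma~\ref{lem:Dmin2}, this proves $\beta(\Dbf)=\beta(\Abf^*)$.

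I do not foresee a real obstacle here; the only subtlety is making sure the hypotheses are used in the right place. The ``no loops'' assumption is needed to rule out $\Abf(u,u)=1$ (which would spoil the bijection $\Abf^*\leftrightarrow\Dbf$ on the diagonal), and the bound $\diam(G)\le 2$ is needed both to guarantee connectedness (so that $\Dbf$ has no infinite entries) and to keep off-diagonal distances within $\{1,2\}$ so that they still match the $\{0,1\}$ range of off-diagonal $\Abf^*$ entries under $x\mapsto 2-x$. Once these are checked, the argument reduces to the trivial observation that applying a bijection to matrix entries does not change which column subsets separate rows.
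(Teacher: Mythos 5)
Your proposal is correct and follows essentially the same route as the paper: both establish the identity $\Abf^*(u,v)=2-\Dbf(u,v)$ under the stated hypotheses and use it to show that any resolving set of $\Dbf$ resolves $\Abf^*$, combining this with Lemma~\ref{lem:Dmin2} for the reverse inequality. Your extra remarks on where connectedness and the no-loops assumption enter are accurate but do not change the argument.
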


\begin{proof}
From the previous lemma we know that $\beta(\Dbf)\le\beta(\Abf^*)$. 

Next, suppose that $R\subset V$ resolves $\Dbf$. Because of the hypothesis on $G$, for all $u,v\in V$ with $u\ne v$, $\Abf(u,u)=\Dbf(u,u)=0$ and $\Abf(u,v)=2-\Dbf(u,v)$. As a result, for all $u,v\in V$, $\Abf^*(u,v)=2-\Dbf(u,v)$. In particular, if $r\in R$ is such that $\Dbf(u,r)\ne\Dbf(v,r)$ then $\Abf^*(u,r)\ne\Abf^*(v,r)$. This shows that any resolving set of $\Dbf$ also resolves $\Abf^*$, hence $\beta(\Abf^*)\le\beta(\Dbf)$, which implies the corollary.
\end{proof}

\section{Stochastic Block Model Bounds}
\label{sec:bounds}

In the context of the SBM ensemble, working with $\Abf$ or $\Abf^*$ instead of $\Dbf$ to bound metric dimension has two major advantages. First, unlike $\Dbf$, the entries of $\Abf$ and $\Abf^*$ are independent (though not necessarily identically distributed). Second, due to Lemma~\ref{lem:Dmin2}, the simpler structure of $\Abf^*$ facilitates the discovery of resolving sets of $G$. Of course, $\beta(\Abf^*)$ may be a loose upper-bound of $\beta(\Dbf)$. We show next, however, that across many regimes of the SBM, $\diam(G) \leq 2$ with high probability; in particular, due to Corollary~\ref{cor:DA*}: $\beta(\Dbf)=\beta(\Abf^*)$.

To facilitate the study of the SBM as graph size increases, we define $G_n\sim SBM(n;C_n,P_n)$ in a way analogous to $G \sim SBM(n;C,P)$. However, now $C_n=\{V_{1,n},\ldots,V_{c,n}\}$ is a partition of $\{1,\ldots,n\}$ of some fixed size $c\ge1$, and $P_n$ is a symmetric matrix of dimension $(c\times c)$ with entries in $[0,1]$ for each $n\geq1$. In what follows, $n_i:=|V_{i,n}|$ and $n:=\sum_{i=1}^cn_i$.

\begin{lemma}
Let $s_n(i,j)$ be the number of sets $\{u,v\}$, with $u\ne v$, such that $u\in V_{i,n}$ and $v\in V_{j,n}$, and let
\[\K:=\left\{(i,j)\in\{1,\ldots,c\}^2\hbox{ such that }\limsup_{n\to\infty}s_n(i,j)(1-P_n(i,j))>0\right\}.\]

\noindent If $G_n\sim\SBM(n;C_n,P_n)$, $n_in_j\to\infty$ for each $(i,j)\in\K$, and for all $n$ large enough:
\begin{equation}
\label{eq:P_cond}
\sum_{k=1}^c n_k P_n(i,k)P_n(k,j) \geq C \cdot \ln(n_i n_j),\hbox{ for all $(i,j) \in\K$}
\end{equation}

\noindent for some constant $C>1$, then $\diam(G_n)\le2$ with high probability as $n \rightarrow \infty$.
\label{lem:sbm_diam_const}
\end{lemma}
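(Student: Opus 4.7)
My plan is to bound the expected number of unordered pairs of distinct vertices at distance more than $2$ and then apply Markov's inequality. Concretely, let $X_n$ count the unordered pairs $\{u,v\}\subseteq V$ with $u\ne v$, $\{u,v\}\notin E$, and no common neighbor in $G_n$. Since $\{\diam(G_n)\le 2\}=\{X_n=0\}$, it suffices to show $\Exp[X_n]\to 0$.

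Next I would exploit the edge-independence intrinsic to the SBM. For $u\in V_{i,n}$ and $v\in V_{j,n}$ with $u\ne v$, the edge between $u$ and $v$ and the edges joining any other vertex $w$ to each of $u$ and $v$ are mutually independent. Writing $m_k:=n_k-\mathbf{1}[k=i]-\mathbf{1}[k=j]$ for the number of candidate common neighbors in community $k$, this gives
\begin{equation*}
\Prob\!\bigl(u\not\sim v,\,N(u)\cap N(v)=\emptyset\bigr)=(1-P_n(i,j))\prod_{k=1}^{c}\bigl(1-P_n(i,k)P_n(k,j)\bigr)^{m_k}.
\end{equation*}
Applying $1-x\le e^{-x}$ and absorbing the at most two missing factors into an $e^{2}$ prefactor,
\begin{equation*}
\Prob\!\bigl(N(u)\cap N(v)=\emptyset\bigr)\le e^{2}\exp\!\Bigl(-\sum_{k=1}^{c}n_k P_n(i,k)P_n(k,j)\Bigr).
\end{equation*}

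I would then decompose $\Exp[X_n]$ according to the community pair $(i,j)$ to which the endpoints belong. For $(i,j)\in\K$, hypothesis~\eqref{eq:P_cond} upper-bounds the displayed product by $e^{2}(n_in_j)^{-C}$, and since there are at most $s_n(i,j)\le n_in_j$ such pairs, their contribution to $\Exp[X_n]$ is at most $e^{2}(n_in_j)^{1-C}$, which vanishes thanks to $C>1$ and $n_in_j\to\infty$. For $(i,j)\notin\K$, the definition of $\K$ yields $s_n(i,j)(1-P_n(i,j))\to 0$, and since the joint probability above is dominated by the marginal $1-P_n(i,j)$, the contribution of these communities is at most $s_n(i,j)(1-P_n(i,j))$ and hence also vanishes. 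Summing the finitely many ($c^{2}$) community pairs gives $\Exp[X_n]\to 0$.

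The principal subtlety is precisely this dichotomy: hypothesis~\eqref{eq:P_cond} is imposed only on $(i,j)\in\K$, so outside of $\K$ the decay has to come from the rarity of non-adjacency rather than from the abundance of common neighbors, and one must verify that these two regimes together cover all community pairs. The remaining bookkeeping---the $\mathbf{1}[k=i]$ corrections in $m_k$, the distinction between same-community pair counts $\binom{n_i}{2}\le n_i^{2}/2$ and cross-community counts $n_in_j$, and the uniform $e^{2}$ slack from dropping two factors---introduces only bounded multiplicative constants and does not affect the limit.
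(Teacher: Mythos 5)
Your proposal is correct and follows essentially the same route as the paper: a first-moment/Markov bound on the count of pairs at distance greater than $2$, the exact same product formula for the non-adjacency-and-no-common-neighbor probability, the bound $1-x\le e^{-x}$, and the same dichotomy between pairs $(i,j)\in\K$ (handled via hypothesis~\eqref{eq:P_cond} and $C>1$) and $(i,j)\notin\K$ (handled via $s_n(i,j)(1-P_n(i,j))\to 0$). The only cosmetic difference is that you make the $e^{2}$ slack from the $\llbracket i=k\rrbracket+\llbracket j=k\rrbracket$ corrections explicit where the paper absorbs it into an $O(\cdot)$.
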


\begin{proof}
Notice that $s_n(i,i)={n_i\choose 2}$ and, for $i\ne j$, $s_n(i,j)=n_in_j$. Let $W_{i,j}$ denote the number of sets of the form $\{u,v\}$ with $u \in V_{i,n}$ and $v \in V_{j,n}$ such that $d(u,v)>2$, and define $W := \sum_{1 \leq i \leq j \leq c} W_{i,j}$. Due to Markov's inequality:
\[\Prob(\diam(G_n)>2) = \Prob(W > 0) \leq \E(W) = \sum_{1 \leq i \leq j \leq c} \E(W_{i,j}),\]
where
\[\E(W_{i,j}) = s_n(i,j) (1-P_n(i,j)) \prod_{k=1}^c (1-P_n(i,k)P_n(k,j))^{n_k- \llbracket i=k \rrbracket - \llbracket j=k \rrbracket}.\]
If $(i,j)\notin\K$ then $\E(W_{i,j})=o(1)$ since $\E(W_{i,j})\le s_n(i,j) (1-P_n(i,j))$. Instead, if $(i,j)\in\K$ then, using the well-known inequality $(1-x)\le e^{-x}$, 
we find from equation~(\ref{eq:P_cond}) that
\[\E(W_{i,j})
= O\left(n_in_j e^{-\sum\limits_{k=1}^c n_k P_n(i,k)P_n(k,j)}\right)
= O\left((n_in_j)^{1-C}\right)
=o(1).\]
As a result, $\E(W) = o(1)$ and $\Prob(\diam(G_n)>2)\to0$ as $n\to\infty$.
\end{proof}

Assuming that all communities are of the same order of magnitude and applying a stronger, reversed version of condition (\ref{eq:P_cond}), a regime in which $\diam(G)>2$ with high probability may be characterized as well.

\begin{lemma}
Let $G_n \sim SBM(n;C_n,V_n)$ with $n_j = \Theta(n)$ for all $1 \leq j \leq c$. If there exists $1 \leq i,j \leq c$ such that
\begin{small}
\begin{align}
    &\sum_{k=1}^c n_k P_n(i,k)^2 \leq C \cdot \ln(n^2); \text{ or}  && \label{eq:P_cond_conv_a}\\
    &\sum_{k=1}^c n_k P_n(i,k)P_n(k,j) \leq C \cdot \ln(n^2) + \ln(1-P_n(i,j)),\text{ and }\max_{1 \leq k \leq c}P_n(k,j)\leq \frac{1}{2} && \label{eq:P_cond_conv_b}
\end{align}
\end{small}
\noindent for some constant $C<1$, then $\diam(G) > 2$ with high probability as $n \rightarrow \infty$.
\label{lem:sbm_diam_const2}
\end{lemma}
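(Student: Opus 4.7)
The plan is to apply the second moment method to the counting variable $W_{i,j}$ defined in the proof of Lemma~\ref{lem:sbm_diam_const}, with $(i,j)$ as provided by the hypothesis (taking $j=i$ under condition (\ref{eq:P_cond_conv_a})). Since
\[
\Prob(\diam(G_n) > 2) \geq \Prob(W_{i,j} > 0) \geq \frac{\Exp(W_{i,j})^2}{\Exp(W_{i,j}^2)}
\]
by the Paley--Zygmund inequality, it will suffice to prove that $\Exp(W_{i,j}) \to \infty$ and that $\Exp(W_{i,j}^2) \leq (1+o(1))\,\Exp(W_{i,j})^2$ as $n\to\infty$.

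For the first moment, I begin with the closed-form expression for $\Exp(W_{i,j})$ already derived in the proof of Lemma~\ref{lem:sbm_diam_const}. The hypothesis together with $n_k = \Theta(n)$ forces $P_n(i,k)P_n(k,j) \to 0$ uniformly in $k$; the assumption $\max_k P_n(k,j) \leq 1/2$ in (\ref{eq:P_cond_conv_b}) is exactly what ensures this. Consequently, the bound $(1-x) \leq e^{-x}$ used in Lemma~\ref{lem:sbm_diam_const} can be sharpened to the matching lower bound $\ln(1-x) \geq -x(1+o(1))$ term by term, so
\[
\prod_{k=1}^c (1-P_n(i,k)P_n(k,j))^{n_k} \geq \exp\Bigl(-(1+o(1))\sum_{k=1}^c n_k P_n(i,k)P_n(k,j)\Bigr).
\]
Substituting the hypothesis and using $s_n(i,j) = \Theta(n^2)$ then gives $\Exp(W_{i,j}) = \Omega(n^{2-2C(1+o(1))}) \to \infty$ since $C<1$: under (\ref{eq:P_cond_conv_b}) the $\ln(1-P_n(i,j))$ term on the right-hand side compensates for the $(1-P_n(i,j))$ prefactor, while under (\ref{eq:P_cond_conv_a}) one has $P_n(i,i)\to 0$ so the prefactor is itself $1-o(1)$.

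For the second moment, I decompose $\Exp(W_{i,j}^2) = \Exp(W_{i,j}) + \sum_{P \neq P'} \Prob(X_P=1,\,X_{P'}=1)$, where $X_P = \llbracket d(u,v)>2 \rrbracket$ for $P = \{u,v\}$, and partition the sum by $|P \cap P'| \in \{0,1\}$. Pairs sharing one vertex produce only $O(n^3)$ summands, each with joint probability $O(n^{-4C})$ by a direct adaptation of the first-moment calculation; their total is then $O(n^{3-4C}) = o(n^{4-4C}) = o(\Exp(W_{i,j})^2)$. The main obstacle is the disjoint case: because each $X_P$ is a decreasing function of the edge indicators, Harris--FKG guarantees positive correlation, so an explicit upper bound on $\Prob(X_P=1,\,X_{P'}=1)$ is required. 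The key observation is that $X_P$ and $X_{P'}$ depend jointly on only the at most four cross-edges between $P$ and $P'$, each participating in only $O(1)$ ``forbidden 2-path'' constraints per side; expanding the joint probability and comparing with $\Prob(X_P=1)\,\Prob(X_{P'}=1)$ yields a multiplicative correction of the form $\exp(O(\max_{k} P_n(i,k)P_n(k,j))) = 1+o(1)$ uniformly over disjoint $P,P'$. Combining these bounds gives $\Exp(W_{i,j}^2) \leq (1+o(1))\Exp(W_{i,j})^2$, and Paley--Zygmund concludes $\Prob(W_{i,j}>0)\to 1$, whence $\diam(G_n)>2$ with high probability.
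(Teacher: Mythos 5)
Your overall strategy---Paley--Zygmund applied to $W_{i,j}$, with the second moment decomposed according to $|P\cap P'|\in\{0,1,2\}$---is the same as the paper's, and your treatment of the first moment and of the disjoint pairs (where the dependence between $X_P$ and $X_{P'}$ is confined to the $O(1)$ cross-edges among the four vertices, giving a $1+o(1)$ multiplicative correction) matches the paper's computation of $\E(W_{i,j})$ and of its factor $Q_{i,j}\to1$. The gap is in the pairs sharing exactly one vertex. You claim each such pair has joint probability $O(n^{-4C})$ ``by a direct adaptation of the first-moment calculation,'' but conditions (\ref{eq:P_cond_conv_a})--(\ref{eq:P_cond_conv_b}) bound $\sum_k n_kP_n(i,k)P_n(k,j)$ only from \emph{above}, so they yield a \emph{lower} bound of order $n^{-2C}$ on the marginal $\Prob(X_P=1)$, not an upper bound; and, as you yourself note via Harris--FKG, the joint probability exceeds the product of the marginals. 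Concretely, for $P=\{u,v\}$, $P'=\{u,t\}$ with $v,t\in V_{j,n}$, the joint probability is asymptotically $e^{-\sum_k n_kP_n(i,k)P_n(k,j)(2-P_n(k,j))}$, which exceeds $\Prob(X_P=1)\,\Prob(X_{P'}=1)$ by the factor $e^{\sum_k n_kP_n(i,k)P_n(k,j)^2}$; this factor can be as large as $n^{C}$, and the claimed $O(n^{-4C})$ bound is simply false in general (e.g., if $\sum_k n_kP_n(i,k)P_n(k,j)=\ln n$ the joint probability is at least $n^{-2}$, while $n^{-4C}$ can be $n^{-3.6}$).

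The way to close this---and what the paper does---is to bound the \emph{ratio} of the one-shared-vertex contribution $I_1$ to $\E(W_{i,j})^2$, which is asymptotically $\tfrac{1}{\Theta(n)}\,e^{\sum_k n_kP_n(i,k)P_n(k,j)^2}$, and only then invoke $\max_kP_n(k,j)\le\tfrac12$ to get $e^{\sum_k n_kP_n(i,k)P_n(k,j)^2}\le e^{\frac12\sum_k n_kP_n(i,k)P_n(k,j)}=O(n^{C})$, whence the ratio is $O(n^{C-1})=o(1)$ (with the symmetric argument for pairs sharing the vertex in $V_{j,n}$). This is where the hypothesis $\max_kP_n(k,j)\le\tfrac12$ is actually needed; in your write-up you assign it the role of forcing $P_n(i,k)P_n(k,j)\to0$, which already follows from $n_kP_n(i,k)P_n(k,j)\le 2C\ln n$ and $n_k=\Theta(n)$ without it. Without that hypothesis the crude bound $2-P_n(k,j)\ge1$ only gives a ratio of order $n^{2C-1}$, which diverges for $C>\tfrac12$, so this step cannot be waved through: the $\le\tfrac12$ assumption must enter precisely in the $I_1$ estimate.
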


\begin{proof}
Define $W$ and $W_{i,j}$ as in the proof of Lemma~\ref{lem:sbm_diam_const} and let $X_{\{u,v\}}$ be the indicator random variable associated with the event that $d(u,v)>2$ for $u,v \in V_n$. As a result
\[W = \sum_{1 \leq i \leq j \leq c} W_{i,j} = \sum_{u,v \in V_n} X_{\{u,v\}}.\] 

Let $C<1$ and suppose that condition (\ref{eq:P_cond_conv_a}) holds. In particular, since $n_j = \Theta(n)$ for all $1 \leq j \leq c$, there is $i$ such that $P_n(i,j)^2 \leq 2C\ln(n)/n$, which implies that $P_n(i,j) = o(1)$, $nP_n(i,j)^3 = o(1)$, and $nP_n(i,j)^4 = o(1)$. 

Observe that
\begin{align*}
\E(W_{i,i}) 
&= \binom{n_i}{2} (1-P_n(i,i)) \prod_{k=1}^c (1-P_n(i,k)^2)^{n_k - 2\llbracket i=k \rrbracket}  \\
&\sim \frac{n_i^2}{2} e^{-\sum_{k=1}^c n_k P_n(i,k)^2} \\
&\geq \frac{n_i^2}{2} e^{-C \ln(n^2)}  \\
&\rightarrow \infty.
\end{align*}

On the other hand
\begin{align*}
\E(W_{i,i}^2) &= \left(\sum_{|\{u,v\}\cap\{w,t\}|=2}+\sum_{|\{u,v\}\cap\{w,t\}|=1}+\sum_{|\{u,v\}\cap\{w,t\}|=0}\right)\E(X_{\{u,v\}}X_{\{w,t\}}) \\ \\
&= I_2+I_1+I_0,
\end{align*}
where $I_j$ is the summation associated with the indices such that $j=|\{u,v\}\cap\{w,t\}|$. Then $I_2 = \E(W_{i,i}) = o(\E(W_{i,i})^2)$ because $\E(W_{i,i}) \rightarrow \infty$. Moreover:
\begin{align*}
I_1 
&= \binom{n_i}{2} (n_i-2) \big(1-P_n(i,i))^2 \prod_{k=1}^c (1-P_n(i,k)+P_n(i,k)(1-P_n(i,k))^2\big)^{n_k - 3 \llbracket i=k \rrbracket} \\
&\sim \frac{n_i^3}{2} e^{-\sum_{k=1}^c 2n_k P_n(i,k)^2} \\
&= o(\E(W_{i,i})^2),
\end{align*}
and
\begin{align*}
I_0
&= \binom{n_i}{2} \binom{n_i-2}{2} (1-P_n(i,i))^2 \prod_{k=1}^c (1-P_n(i,k)^2)^{2(n_k-4 \llbracket i=k \rrbracket)} \\
&\sim \frac{n_i^4}{4} e^{-\sum_{k=1}^c 2n_kP_n(i,k)^2}  \\
&\sim \E(W_{i,i})^2.
\end{align*}
Hence, $\E(W_{i,i}^2) = I_2+I_1+I_0 \sim \E(W_{i,i})^2$, and the second moment method implies that $\Prob(W_{i,i} >0)\rightarrow 1$. Since $\Prob(W>0)\ge\Prob(W_{i,i} >0)$, it follows that $\diam(G)>2$ with high probability as $n\to\infty$.

Next, suppose that condition (\ref{eq:P_cond_conv_b}) holds for some $C<1$. Then there is $1\leq i,j\leq c$ such that $P_n(i,k)P_n(k,j)\leq2C\ln(n)/n$ for all $1\le k\le c$; in particular, $P_n(i,k)P_n(k,j) =o(1)$ and $nP_n(i,k)^2P_n(k,j)^2=o(1)$. Consequently:
\begin{align*}
\E(W_{i,j})
&= n_in_j (1-P_n(i,j)) \prod_{k=1}^c (1-P_n(i,k)P_n(k,j))^{n_k - \llbracket i=k \rrbracket - \llbracket j=k \rrbracket}  \\
&\sim n_in_j e^{\ln(1-P_n(i,j)) + \sum_{k=1}^c n_k \ln(1-P_n(i,k)P_n(k,j))}  \\
&\sim n_in_j e^{\ln(1-P_n(i,j)) - \sum_{k=1}^c n_k P_n(i,k)P_n(k,j)} \\
&\geq \Theta(1)\cdot n^{2(1-C)} \rightarrow \infty
\end{align*}

On the other hand, taking an approach similar to the one used for decomposing $W_{i,i}^2$, we find that 
\[\E(W_{i,i}^2) = \sum_{(u,v),(w,t)}\E(X_{(u,v)}X_{(w,t)}) = I_2+I_1+I_0\]

\noindent where $u,w \in V_{i,n}$ and $v,t \in V_{j,n}$. Note that, because $i$ and $j$ may be different communities, the order of vertex pairs matters in this case. Here $I_0$ is associated with indices for which $u\ne w$ and $v\ne t$; $I_1$ with indices for which $u=w$ and $v\ne t$, or $u\ne w$ and $v=t$; and $I_2$ with indices for which $u=w$ and $v=t$ so that $I_2 = \E(W_{i,j}) = o(\E(W_{i,j})^2)$. Splitting $I_1$ so that $I_1 = I_1^{(1)} + I_1^{(2)}$, where $I_1^{(1)}$ restricts pairs of indices $(u,v)$ and $(w,t)$ to $u=w$, whereas $I_1^{(2)}$ to $v=t$, we have
\begin{small}
\begin{align*}
I_1^{(1)}
&= n_i \binom{n_j}{2} (1-P_n(i,j))^2 \prod_{k=1}^c \big(1-P_n(i,k)+P_n(i,k)(1-P_n(k,j))^2\big)^{n_k - \llbracket i=k \rrbracket - 2\llbracket j=k \rrbracket}  \\
&\sim \frac{n_in_j^2}{2} e^{2\ln(1-P_n(i,j)) - \sum_{k=1}^c n_k P_n(i,k)P_n(k,j)(2-P_n(k,j))}.
\end{align*}
As a result, using the additional hypothesis that $\max_{1\le k\le c}P_n(k,j)\le1/2$, in particular, $(2-P_n(k,j))\ge3/2$ for all $1\le k\le c$, we obtain that
\begin{align*}
\frac{I_1^{(1)}}{\E(W_{i,j})^2}
&\sim \frac{e^{-\sum_{k=1}^c n_k P_n(i,k)P_n(k,j)(2-P_n(k,j))}}{2n e^{-2\sum_{k=1}^c n_k P_n(i,k)P_n(k,j)}} && \\
&\leq \frac{e^{\frac{1}{2}\sum_{k=1}^c n_k P_n(i,k)P_n(k,j)}}{2n} &&\\
&=O(n^{C-1}) && \\
&= o(1).
\end{align*}
\end{small}

\noindent Similarly, since
\[I_1^{(2)}\sim \frac{n_i^2n_j}{2} e^{2\ln(1-P_n(i,j)) - \sum_{k=1}^c n_k P_n(i,k)P_n(k,j)(2-P_n(i,k))},\]
we also have that
\[\frac{I_1^{(2)}}{\E(W_{i,j})^2}=o(1).\]
As a result, $I_1=o(\E(W_{i,j})^2)$.

Finally,
\begin{small}
\[I_0 = n_in_j(n_i-1)(n_j-1) (1-P_n(i,j))^2 \prod_{k=1}^c (1-P_n(i,k)P_n(k,j))^{2(n_k - 2\llbracket i=k \rrbracket - 2\llbracket j=k \rrbracket)} \cdot Q_{i,j},\]
\end{small}

\noindent where $Q_{i,j}$ is the probability that $\{u,t\}\not\in E$ or $\{u,w\}\notin E$, and $\{v,w\}\not\in E$ or $\{v,t\}\notin E$, and $\{w,u\}\not\in E$ or $\{w,v\}\notin E$, and $\{t,u\}\not\in E$ or $\{t,v\}\notin E$. Since $P_n(i,i)P_n(i,j)\to0$, the probability that $\{u,t\}\not\in E$ or $\{u,w\}\notin E$ (i.e. $1-P_n(i,i)P_n(i,j)$) tends to 1. Similarly, the probability that $\{w,u\}\not\in E$ or $\{w,v\}\notin E$ also tends to 1. Likewise, because  $P_n(i,j)P_n(j,j)\to0$, the probability of $\{v,w\}\not\in E$ or $\{v,t\}\notin E$, and of $\{t,u\}\not\in E$ or $\{t,v\}\notin E$ converge to 1. So $Q_{i,j}\to 1$, and as a result:
\begin{align*}
I_0
&\sim n_i^2n_j^2 (1-P_n(i,j))^2 \prod_{k=1}^c (1-P_n(i,k)P_n(k,j))^{2n_k} \\
&\sim n_i^2n_j^2 e^{2\ln(1-P_n(i,j))-\sum_{k=1}^c 2n_k P_n(i,k)P_n(k,j)} \\
&\sim \E(W_{i,j})^2.
\end{align*}
Then $\E(W_{i,j}^2) = I_2+I_1+I_0 \sim \E(W_{i,j})^2$, and again by the second moment method we have that $\Prob(W>0)\ge \Prob(W_{i,j} >0 )\rightarrow 1$, which completes the proof of the lemma. 
\end{proof}

For $G \sim SBM(n;C,P)$ with $C$ and $P$ constant, Lemma~\ref{lem:sbm_diam_const} implies $\diam(G) \leq 2$ with high probability (see Figure~\ref{fig:sbm_example}). 

There are a variety of other regimes for which graphs in the SBM ensemble have diameter at most 2 with high probability, sometimes unexpectedly. For instance, consider the following two community SBM:
\[P_n = \begin{bmatrix}
o(1) & 1 \\
1 & o(1)
\end{bmatrix}.
\]
In this case, the diameter of each individual community may be quite large, potentially on the order of $(1+o(1))\frac{\ln(n_i)}{\ln(n_iP_n(i,i))}$ if $\ln(n_i) > n_iP_n(i,i)$ and $n_iP_n(i,i)\to\infty$~\cite{chung2001diameter}. Nevertheless, $\diam(G)\le2$ with high probability as $n$ tends to infinity according to Lemma~\ref{lem:sbm_diam_const}. The intuitive reason for this is that the shortest path between two nodes in the same community is likely to include nodes of other communities.

\begin{figure}[h] 
\centering 
\includegraphics[scale=0.5]{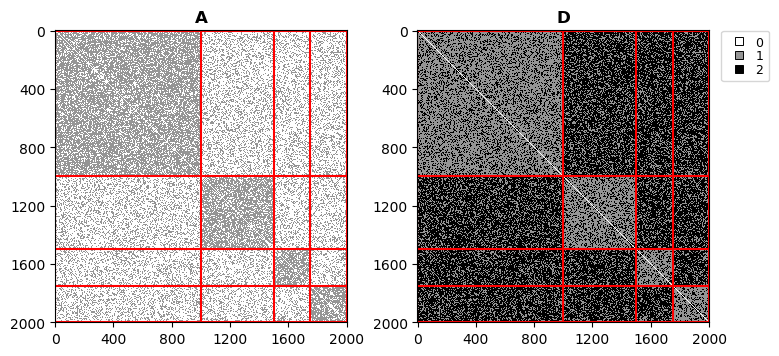} 
\caption[Lof entry]{$G \sim SBM(n;C,P)$ with $n=2000$, community sizes $1000$, $500$, $250$, and $250$, respectively, intra-community adjacency probability $0.7$, and inter-community adjacency probability $0.3$. As expected from Lemma~\ref{lem:sbm_diam_const}, $\diam(G) \leq 2$ in this case.} 
\label{fig:sbm_example} 
\end{figure}

\section{Resolvability of Single Community Random Networks}

The single community SBM is equivalent to the Erd\"os-R\'enyi model $G_{n,p}$. Graphs in this ensemble have $n$ vertices and pairs of distinct vertices are neighbors with probability $p$, independently of all other pairs. We denote the distance and adjacency matrices of $G_{n,p}$ as $\Dbf_{n,p}$ and $\Abf_{n,p}$, respectively, and define $\Abf^*_{n,p} := \Abf + \diag(2)$ as in Lemma~\ref{lem:Dmin2}.

Tight bounds on the metric dimension of $G_{n,p}$ have been established~\cite{bollobas2012metric}. These results focus on several regimes defined by $(n-1)p$, the expected degree of nodes in $G_{n,p}$, and utilize complete distance information whenever possible. In this section, we characterize the metric dimension of $\Abf^*_{n,p}$, and relate it to $\Dbf_{n,p}$.

\begin{lemma}
Let $q=\min\{p,1-p\}$. If $\frac{\ln(n)}{n}=o(q)$ then $\beta(\Abf^*_{n,p}) \leq \lceil\frac{-2\ln(n)}{\ln(p^2+(1-p)^2)}\rceil$ with high probability. On the other hand, if $\frac{\ln^2(n)}{n^{1-C/2}} = o(q)$ for some constant $1<C<2$ then $\beta(\Abf^*_{n,p})\geq \lfloor\frac{-C\ln(n)}{\ln(p^2+(1-p)^2)}\rfloor$ with high probability. 
\label{lem:betaAnp}
\end{lemma}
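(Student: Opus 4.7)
Set $a := p^2 + (1-p)^2$. For distinct $u,v,r\in V$, $\Prob(\Abf(u,r)=\Abf(v,r))=a$ with independence across $r$; moreover, any $r\in\{u,v\}$ automatically separates $u$ from $v$ in $\Abf^*$ because of the diagonal 2. Hence if $R$ is drawn uniformly from the $k$-subsets of $V$ independently of the graph, then the probability that $R$ fails to resolve a fixed pair $\{u,v\}$ equals $\binom{n-2}{k}/\binom{n}{k}\cdot a^k\le a^k$.

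For the upper bound, I would take $k := \lceil -2\ln n/\ln a\rceil$, so that $a^k\le n^{-2}$ by the ceiling, and bound the expected number of pairs unresolved by a random $R$ of this size by $\binom{n}{2}a^k\le 1/2$. Markov's inequality then yields the existence of a resolving set of size $k$ with constant probability, and the upgrade to ``with high probability'' would proceed via a Janson-type (or second-moment) concentration of the unresolved-pair count: the indicators $Z_{\{u,v\}}$ are independent across vertex-disjoint pairs, and their covariance on overlapping pairs is proportional to $b^k$ with $b := p^3 + (1-p)^3\ge a^2$. The density hypothesis $\ln n/n = o(q)$ is used exactly to keep $k = O(\ln n/q)$ sub-linear in $n$ and to suppress the ratio $(b/a^2)^k$ enough to make the variance correction negligible.

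For the lower bound, fix $C\in(1,2)$ and set $k := \lfloor -C\ln n/\ln a\rfloor$; the goal is to show that no subset of $V$ of size $k$ resolves $\Abf^*_{n,p}$ w.h.p. For any fixed $R$ of size $k$, the signatures $\sigma_R(u):=(\Abf^*(u,r))_{r\in R}$ for $u\notin R$ are i.i.d.\ with pairwise collision probability $a^k\ge n^{-C}$, so the number of colliding pairs $X(R)$ has mean $\Theta(n^{2-C})\to\infty$. I would apply Janson's inequality to the collision events to get $\Prob(X(R)=0)\le \exp(-\Theta(n^{2-C}))$, and then union-bound over the $\binom{n}{k}\le\exp(O(k\ln n))$ candidate $R$'s; this collapses because $n^{2-C}$ dominates $k\ln n$, which is precisely the content of the stricter hypothesis $\ln^2 n/n^{1-C/2}=o(q)$.

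The key obstacle in both halves is controlling the Janson covariance $\Delta = O(n^3 b^k)$: since $b - a^2 = p(1-p)(2p-1)^2\ge 0$, the ratio $(b/a^2)^k$ can grow exponentially in $k$ when $p$ is near $0$ or $1$, and the delicate calculation is verifying that the stated hypotheses on $q$ force $\Delta$ to remain small compared with $\E[X]$ for the upper bound and with $\E[X(R)]^2 = \Theta(n^{4-2C})$ for the lower bound. Once this is in place, the two halves follow as sketched.
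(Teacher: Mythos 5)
Your lower-bound sketch is essentially the paper's argument: bound the expected number of resolving $k$-subsets by $\binom{n}{k}\Prob(W=0)$, control $\Prob(W=0)$ with a Suen/Janson-type lower-tail inequality, and identify the term $\Delta=O(n^3 r_3^k)$ with $r_3=p^3+(1-p)^3$ as the bottleneck against $\mu^2=\Theta(n^4 r_2^{2k})$; the paper closes this via $\ln(r_3)/\ln(r_2)\ge 3/2$, which gives $\mu^2/\Delta\gtrsim n\,r_2^{k/2}\ge n^{1-C/2}$, and that is exactly where the hypothesis $\ln^2(n)/n^{1-C/2}=o(q)$ enters. (Your claimed rate $\exp(-\Theta(n^{2-C}))$ for $\Prob(X(R)=0)$ is optimistic --- the usable exponent is $\min\{\mu^2/8\Delta,\mu/6\delta,\mu/2\}$, which can be as small as $\Theta(n^{1-C/2}q)$ --- but you flag the right obstacle, so this half is fine modulo the computation.)

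The upper bound, however, has a genuine gap. With $k=\lceil -2\ln n/\ln a\rceil$ you get $\E(W)\le 1/2$ but not $\E(W)=o(1)$; generically $\E(W)=\Theta(1)$, and $W$ is then asymptotically Poisson with constant mean, so $\Prob(W>0)$ for a \emph{single} choice of $k$ columns is bounded away from $0$. Your proposed upgrade via ``Janson-type or second-moment concentration'' cannot work: Janson's inequality upper-bounds $\Prob(W=0)$ (it shows $W>0$ is likely when the mean is large), and concentration of an integer-valued $W$ around a mean of order $1$ does not force $W=0$. The claim to be proved is that \emph{some} $k$-set resolves w.h.p., and the paper's missing idea is to partition the $n$ columns into $\lfloor n/k\rfloor$ disjoint blocks of size $k$: the collision counts $W_1,\dots,W_{\lfloor n/k\rfloor}$ are independent (disjoint columns of $\Abf^*$ are independent), each fails with probability at most $1/2$, so $\Prob(\beta(\Abf^*_{n,p})>k)\le(1/2)^{\lfloor n/k\rfloor}=o(1)$. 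This is also the real role of the hypothesis $\ln(n)/n=o(q)$: it guarantees $k=o(n)$ so that the number of independent blocks tends to infinity. Without this (or an equivalent amplification over many candidate sets), your argument only delivers the bound with constant probability, not with high probability.
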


\emph{Proof.} In what follows, $r_i:=p^i+(1-p)^i$ for $i\ge1$.

To show the first part of the lemma set $k:= \lceil\frac{-2\ln(n)}{\ln(r_2)}\rceil$; in particular, $n^2r_2^k\le1$. We claim that under the hypothesis on $p$, $k=o(n)$. In fact, since this is certainly the case when $p$ is bounded away from $0$ and $1$, and $k$ is a symmetric function of $p$ about $p=1/2$, it suffices to verify the claim only when $p\to0^+$. But, in this case, $\ln(r_2)\sim -2p$, hence $k\sim\frac{\ln(n)}{p}=o(n)$ as claimed.

Next, select $k$ columns in $\Abf^*_{n,p}$, and let $W$ denote the number of pairs of rows that are identical over the selected columns. Due to the first moment method, and since rows corresponding to selected columns are guaranteed to be unique as a result of diagonal entries, we obtain:
\begin{equation*}
\Prob(W>0)\leq \E(W) = \binom{n-k}{2}r_2^k 
\end{equation*}
In particular, since $\binom{n-k}{2} \leq \binom{n}{2}$, we find that:
\[\Prob(W>0)\leq \binom{n}{2}r_2^k\leq \frac{n^2r_2^k}{2} \leq \frac{1}{2}.\]

Define $W_i$, for $1\le i\le\lfloor n/k\rfloor$, as the number of pairs of rows in $\Abf^*_{n,p}$ that are identical when the latter is restricted to the columns with labels $(i-1)k+1,\ldots,ik$. Since $W_1,\ldots,W_{\lfloor n/k\rfloor}$ are independent random variables and $k=o(n)$, the above inequality implies that
\[\Prob(\beta(\Abf^*_{n,p}) > k)\leq \Prob\left(W_i>0,\hbox{ for all }1\le i\le\left\lfloor \frac{n}{k}\right\rfloor\right)\le\left(\frac{1}{2}\right)^{\lfloor \frac{n}{k}\rfloor}=o(1).\]
This shows the first part of the lemma.

To show the second part, let $1\le k\le n$ be an integer, and $S$ denote the total number of resolving subsets of $k$ columns in $\Abf^*_{n,p}$. In particular, 
\begin{equation}
\Prob(S>0) \leq \E(S) = \binom{n}{k} \cdot\Prob(W=0),
\label{ide:key1}
\end{equation}
with $W$ defined in an analogous manner as above. To bound the last factor we use an exponential bound from~\cite{janson1998new}. For this, let $\mathcal{I}$ be the set with elements of the form $\{u,v\}$, where $u$ and $v$ are different rows in $\Abf^*_{n,p}$, and let $X_{\{u,v\}}$ be the indicator function of the event that rows $u$ and $v$ are indistinguishable with respect to the given set of $k$ columns. The dependency graph associated with $\mathcal{I}$ is $\Gamma=(\mathcal{I}, E_{\Gamma})$ where $s,t \in \mathcal{I}$ are adjacent when $|s \cap t| > 0$. Following ~\cite[Theorem 3]{janson1998new} we have that
\begin{equation}
\Prob(W=0) \leq 
e^{-\alpha},
\label{ide:key2}
\end{equation}
where
\begin{align*}
\alpha &:= \min\left\{\frac{\mu^2}{8\Delta}, \frac{\mu}{6\delta}, \frac{\mu}{2}\right\}\\
\mu &:= \E(W) = \binom{n-k}{2}r_2^k \\ 
\Delta &:= \sum_{\{s,t\} \in E_{\Gamma}} \E(X_sX_t) \leq 3\binom{n}{3}r_3^k\\
\delta &:= \max_{s \in \mathcal{I}} \sum_{\substack{t \in \mathcal{I}, \{s,t\} \in E_{\Gamma}}} r_2^k \leq 2(n-2)r_2^k.
\end{align*}

Next, assume that $\frac{\ln^2(n)}{n^{1-C/2}}=o(q)$ for some constant $1<C<2$. It follows from the inequalities~(\ref{ide:key1}) and~(\ref{ide:key2}) that 
\begin{equation}
P(S>0)\leq e^{k\ln(n)-\alpha} = n^{k\big(1-\frac{\alpha}{k\ln(n)}\big)}.
\label{ine:master}
\end{equation}

Suppose first $p\to0^+$; in particular, $\ln(r_2)\sim -2p$.

Due to the assumptions on $C$ and $q$:
\begin{equation}
\frac{\ln^2(n)}{n}\ll\frac{\ln^2(n)}{n^{2-C}}\ll \frac{\ln^2(n)}{n^{1-C/2}}\ll p.
\label{ide:orders}
\end{equation}
In particular, $\ln(n)/n\ll p$. So if we define $k=\lfloor\frac{-C\ln(n)}{\ln(r_2)}\rfloor$ then $k\sim\frac{C\ln(n)}{2p}\ll n$ (equation~(\ref{ine:master}) requires $1\le k\le n$). Since $\binom{n-k}{2} \sim \binom{n}{2}$, this implies that
\[\mu\sim\frac{n^2r_2^k}{2}.\]
We show now that the right-hand side on equation (\ref{ine:master}) converges to $0$. For this, it suffices to prove that $\lim_{n\to\infty}\frac{\alpha}{k\ln(n)}=+\infty$, which we show considering each possibility for the minimum in $\alpha$.

If $\alpha = \frac{\mu^2}{8\Delta}$ and since $r_2^k\ge n^{-C}$, $\ln(r_3)/\ln(r_2)\ge3/2$ for all $0 < p < 1$, and $r_3\ge1/4$, then
\begin{align*}
\frac{\alpha}{k\ln(n)}
&\sim\frac{nr_2^{2k}p}{8C\ln^2(n)r_3^k}\\
&\ge\frac{n^{1-2C}p}{8C\ln^2(n)r_3^k}\\
&\ge\frac{r_3n^{1-2C}e^{C\ln(n)\cdot\ln(r_3)/\ln(r_2)}p}{8C\ln^2(n)}\\
&\ge\frac{n^{1-C/2}p}{32C\ln^2(n)}\longrightarrow\infty,
\end{align*}
because of equation~(\ref{ide:orders}).

If $\alpha = \frac{\mu}{6\delta}$, then
\begin{align*}
\frac{\alpha}{k\ln(n)} &\sim \frac{n^2r_2^kp}{6nr_2^kC\ln^2(n)} \\
                       &= \frac{np}{6C\ln^2(n)} \longrightarrow \infty
\end{align*}

Finally, if $\alpha = \frac{\mu}{2}$, then
\begin{align*}
\frac{\alpha}{k\ln(n)} &\sim \frac{n^2r_2^kp}{C\ln^2(n)} \\
                       &\geq \frac{n^{2-C}p}{C\ln^2(n)} \longrightarrow \infty
\end{align*}

Next, suppose that $p\in(0,1)$; in particular, $p$ is bounded away from 0 and 1 and $\ln(r) = \Theta(-2p)$. Following the same argument as above, it can be shown that $\lim_{n \rightarrow \infty} n^{k\big(1-\frac{\alpha}{k\ln(n)}\big)} = 0$. Hence, in all cases $P(S>0) \leq \binom{n}{k}P(W=0) \rightarrow 0$ and the conclusion follows. \hfill $\square$ \\*

The following is now a direct consequence of Lemma~\ref{lem:betaAnp}.

\begin{corollary}
Let $q=\min\{p,1-p\}$. If $\frac{\ln^2(n)}{n^{1-C/2}}=o(q)$ for $1<C<2$, then $\beta(\Abf^*_{n,p})\sim\frac{-2\ln(n)}{\ln(p^2+(1-p)^2)}$, with respect to convergence in probability.
\label{cor:betaAprobconv}
\end{corollary}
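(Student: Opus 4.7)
The plan is to apply Lemma~\ref{lem:betaAnp} directly, sandwiching $\beta(\Abf^*_{n,p})$ between matching upper and lower bounds whose ratio to the target quantity $L_n := \frac{-2\ln(n)}{\ln(p^2+(1-p)^2)}$ tends to $1$. Since the corollary claims convergence in probability, it suffices to prove that for every $\varepsilon>0$, $\Prob\bigl(|\beta(\Abf^*_{n,p})/L_n - 1|>\varepsilon\bigr)\to 0$. The only real subtlety is how to read the hypothesis "for $1<C<2$": because $\ln^2(n)/n^{1-C/2}$ is monotone increasing in $C\in(1,2)$, the sensible (and needed) reading is that the condition holds for every $C\in(1,2)$ (equivalently, for $C$ arbitrarily close to $2$). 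I will use this reading to push the lower bound to $L_n$.

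For the upper bound, I first check that the present hypothesis implies the weaker hypothesis of the first part of Lemma~\ref{lem:betaAnp}. Since $n^{1-C/2}\le n$ for $C\ge 0$ and $\ln(n)\le\ln^2(n)$ for $n$ large, one has $\ln(n)/n \le \ln^2(n)/n^{1-C/2}=o(q)$. Lemma~\ref{lem:betaAnp} then yields $\beta(\Abf^*_{n,p})\le\lceil L_n\rceil$ with high probability, and since $L_n\to\infty$ we get $\beta(\Abf^*_{n,p})/L_n\le 1+o(1)$ with high probability.

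For the lower bound, fix any $\varepsilon>0$ and choose $C\in(1,2)$ with $C/2>1-\varepsilon$; the hypothesis then gives $\ln^2(n)/n^{1-C/2}=o(q)$ for this $C$, and the second part of Lemma~\ref{lem:betaAnp} yields $\beta(\Abf^*_{n,p})\ge\lfloor (C/2)L_n\rfloor$ with high probability. Dividing by $L_n$ and using $L_n\to\infty$ gives $\beta(\Abf^*_{n,p})/L_n\ge C/2 - o(1) \ge 1-2\varepsilon$ with high probability for all $n$ large. Combining with the upper bound produces $|\beta(\Abf^*_{n,p})/L_n-1|\le 2\varepsilon$ with high probability, and since $\varepsilon>0$ was arbitrary this is precisely $\beta(\Abf^*_{n,p})/L_n\to 1$ in probability. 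No step requires calculation beyond what is already in Lemma~\ref{lem:betaAnp}; the main (and essentially only) obstacle is recognizing that the lower bound must be applied for $C$ approaching $2$, not for a single fixed $C$.
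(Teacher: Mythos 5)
Your proposal is correct and matches the paper's (unwritten) argument: the paper simply declares the corollary ``a direct consequence of Lemma~\ref{lem:betaAnp},'' and your write-up supplies exactly the intended details---checking that the hypothesis implies $\ln(n)/n=o(q)$ for the upper bound, and letting $C\uparrow 2$ in the lower bound. Your explicit resolution of the quantifier on $C$ (reading ``for $1<C<2$'' as ``for every such $C$,'' which is what the asymptotic equivalence requires) is the right reading and a worthwhile clarification the paper leaves implicit.
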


We are now able to relate the metric dimensions of $\Abf^*_{n,p}$ and $\Dbf_{n,p}$ in the Erd\"os-R\'enyi model. We note that while the rate of convergence of $p$ towards zero implied by our methods is weaker than the one found in~\cite{bollobas2012metric}, our next result improves on the rate of convergence towards one.

\begin{corollary}
Let $q=\min\{p,1-p\}$. For $\frac{\ln^2(n)}{n^{1-C/2}} =  o(q)$ where $1<C<2$, $\beta(G_{n,p})=\beta(\Abf^*_{n,p})\sim\frac{-2\ln(n)}{\ln(p^2+(1-p)^2)}$ with high probability as $n$ tends to infinity.
\label{cor:betaGnp}
\end{corollary}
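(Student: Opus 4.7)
The plan is to chain together three results already established in the paper: the asymptotic of Corollary~\ref{cor:betaAprobconv} for $\beta(\Abf^*_{n,p})$, the identity $\beta(\Dbf)=\beta(\Abf^*)$ for loopless graphs of diameter at most 2 (Corollary~\ref{cor:DA*}), and the diameter bound of Lemma~\ref{lem:sbm_diam_const} specialized to the single-community SBM. Since $\beta(G_{n,p})=\beta(\Dbf_{n,p})$ by definition and $G_{n,p}$ is loopless, the whole conclusion follows if one can show that the hypothesis on $q$ forces $\diam(G_{n,p})\le 2$ with high probability.

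First, I would specialize Lemma~\ref{lem:sbm_diam_const} to $c=1$, $n_1=n$, $P_n(1,1)=p$. Here $s_n(1,1)=\binom{n}{2}$, so $(1,1)\in\K$ only when $n^2(1-p)$ does not vanish; otherwise the hypothesis set is empty and the lemma applies trivially. In the nontrivial case, condition~(\ref{eq:P_cond}) reduces to $np^2\ge C'\ln(n^2)$ for some $C'>1$, and $n_1n_1=n^2\to\infty$ is automatic. I would verify condition~(\ref{eq:P_cond}) by splitting on $q$: if $q=p$ (so $p\le 1/2$), the assumption $\ln^2(n)/n^{1-C/2}=o(p)$ yields $np^2 \gg n\cdot\ln^4(n)/n^{2-C}=n^{C-1}\ln^4(n)$, which dominates $\ln n$ because $C>1$; if $q=1-p$ (so $p\ge 1/2$), then $np^2\ge n/4\gg\ln n$. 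So Lemma~\ref{lem:sbm_diam_const} gives $\diam(G_{n,p})\le 2$ with high probability.

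Second, on the event $\{\diam(G_{n,p})\le 2\}$, Corollary~\ref{cor:DA*} yields $\beta(\Dbf_{n,p})=\beta(\Abf^*_{n,p})$; combining with the high-probability diameter bound gives $\beta(G_{n,p})=\beta(\Abf^*_{n,p})$ with high probability. Finally, under the same hypothesis $\ln^2(n)/n^{1-C/2}=o(q)$ with $1<C<2$, Corollary~\ref{cor:betaAprobconv} gives $\beta(\Abf^*_{n,p})\sim -2\ln(n)/\ln(p^2+(1-p)^2)$ in probability. A routine union bound merges the two high-probability statements into the desired conclusion.

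The only real step of substance is the case split verifying condition~(\ref{eq:P_cond}); everything else is invocation of prior results. I do not expect any serious obstacle, since the hypothesis on $q$ is already tuned to make both Corollary~\ref{cor:betaAprobconv} and the $c=1$ instance of Lemma~\ref{lem:sbm_diam_const} simultaneously applicable.
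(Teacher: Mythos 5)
Your proof is correct and follows essentially the same route as the paper: establish $\diam(G_{n,p})\le 2$ with high probability, then chain Corollary~\ref{cor:DA*} with Corollary~\ref{cor:betaAprobconv}. The only difference is that the paper obtains the diameter bound by citing an external result (after noting $\sqrt{2\ln(n)/n}\ll p$), whereas you derive it by specializing Lemma~\ref{lem:sbm_diam_const} to $c=1$; your verification of condition~(\ref{eq:P_cond}) via the case split on $q$ is sound, so this is a slightly more self-contained variant of the same argument.
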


\begin{proof}
Since $\sqrt{\frac{2\ln(n)}{n}} \ll p$, $\diam(G_{n,p}) \leq 2$ with high probability~\cite[Theorem 8.5]{blum2016foundations}. Then Corollary~\ref{cor:DA*} implies that $\beta(G_{n,p})=\beta(\Abf^*_{n,p})$. The result follows from Corollary~\ref{cor:betaAprobconv}.
\end{proof}

For the particular case with $p=1/2$, it follows from~\cite{babai1980random} that $\beta(G_{n,1/2})\leq \lceil 3\ln(n) / \ln(2) \rceil$ with high probability as $n$ goes to infinity. Indeed, an $O(n^2)$ algorithm for checking whether or not two random graphs of size $n$ are isomorphic presented in~\cite{babai1980random} uses the $\lceil 3\ln(n) / \ln(2) \rceil$ vertices of highest degree as a kind of resolving set. Intuitively, however, since the degree distribution of $G_{n,1/2}$ is Binomial and hence concentrated around its mean~\cite{arratia1990erdos,cramer1938nouveau}, and since $G_{n,1/2}$ should be a highly homogeneous graph, the degree differences between nodes should be inconsequential when searching for a resolving set. Our next result shows that this is precisely the case and generalizes the bound for $\beta(G_{n,1/2})$ to arbitrary Erd\"os-R\'enyi graphs.

\begin{corollary}
\label{cor:er_bound}
Any set of $k\geq\frac{-3 \ln(n)}{\ln(p^2+(1-p)^2)}$ nodes resolves $G_{n,p}$ with probability greater than or equal to $(1-1/2n)$. In particular: \[\beta(G_{n,p})\le\left\lceil\frac{-3 \ln(n)}{\ln(p^2+(1-p)^2)}\right\rceil,\] with high probability as $n$ increases.
\end{corollary}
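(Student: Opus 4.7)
\medskip

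\noindent\textbf{Proof plan.} The plan is to reduce the claim about $\Dbf_{n,p}$ to an $\Abf^*_{n,p}$ computation via Lemma~\ref{lem:Dmin2}, and then to a first-moment bound. Crucially, because Lemma~\ref{lem:Dmin2} gives $\beta(\Dbf)\le\beta(\Abf^*)$ \emph{unconditionally} (no hypothesis on $\diam(G)$), any set $R$ that resolves $\Abf^*_{n,p}$ automatically resolves $G_{n,p}$. So it suffices to show that an arbitrary $k$-subset $R$ with $k\ge -3\ln(n)/\ln(r_2)$ resolves $\Abf^*_{n,p}$ with probability at least $1-1/(2n)$, where $r_2:=p^2+(1-p)^2$.

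\medskip

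\noindent Next I would fix such an $R$ and identify which pairs of rows of $\Abf^*_{n,p}$ could possibly collide when restricted to the columns in $R$. The diagonal perturbation built into $\Abf^*$ forces every row indexed by $R$ to differ from every other row at a column of $R$: if $r\in R$, then $\Abf^*(r,r)=2$, while for any $v\ne r$ we have $\Abf^*(v,r)\in\{0,1\}$. Therefore only pairs $\{u,v\}$ with $u,v\notin R$ can fail to be separated by $R$, and there are at most $\binom{n-k}{2}\le n^2/2$ such pairs.

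\medskip

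\noindent For a fixed pair $u,v\notin R$ and a fixed $r\in R$, the event $\Abf^*(u,r)=\Abf^*(v,r)$ is the event $\Abf(u,r)=\Abf(v,r)$, which has probability $r_2$; moreover, as $r$ ranges over $R$ these events involve disjoint edges (each edge $\{u,r\}$ or $\{v,r\}$ appears at most once), so they are independent. Hence the pair $\{u,v\}$ collides with probability exactly $r_2^k$. A union bound over pairs then yields
\begin{equation*}
\Prob\bigl(R\text{ does not resolve }\Abf^*_{n,p}\bigr)\;\le\;\binom{n-k}{2}r_2^k\;\le\;\frac{n^2}{2}\,r_2^k.
\end{equation*}
Plugging in $k\ge -3\ln(n)/\ln(r_2)$ gives $r_2^k\le n^{-3}$, so the right-hand side is at most $1/(2n)$, as desired.

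\medskip

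\noindent Finally, for the ``in particular'' statement, specialize to $k=\lceil -3\ln(n)/\ln(r_2)\rceil$: the above shows that this particular $R$ resolves $G_{n,p}$ with probability $\ge1-1/(2n)\to 1$, so $\beta(G_{n,p})\le\lceil -3\ln(n)/\ln(r_2)\rceil$ with high probability. I do not anticipate a serious obstacle: the only care points are (i) remembering to invoke Lemma~\ref{lem:Dmin2} rather than Corollary~\ref{cor:DA*} (thus avoiding any diameter hypothesis), and (ii) justifying independence of the per-column agreement events when bounding the collision probability for a single pair.
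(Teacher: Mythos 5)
Your proposal is correct and follows essentially the same route as the paper's proof: reduce to $\Abf^*_{n,p}$ via Lemma~\ref{lem:Dmin2} (no diameter hypothesis needed), then apply a first-moment/union bound over colliding pairs to get $\Prob(W>0)\le\binom{n-k}{2}r_2^k\le n^2r_2^k/2\le1/(2n)$. You simply make explicit two points the paper leaves implicit, namely that the diagonal entries of $\Abf^*$ rule out collisions involving rows in $R$, and that the per-column agreement events for a fixed pair are independent because they involve disjoint edges.
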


\begin{proof}
Let $r:=p^2+(1-p)^2$. Due to Lemma~\ref{lem:Dmin2}, it suffices to show that any set of $k\geq-3 \ln(n)/\ln(r)$ columns resolves the matrix $\Abf^*_{n,p}$ for $G_{n,p}$ with probability at least $(1-1/2n)$. (The rate of convergence towards $1$ of this probability is somewhat arbitrary, and was selected only to reproduce the result in~\cite{babai1980random} for $p=1/2$.)

Following an analogous argument as in the proof of Lemma~\ref{lem:betaAnp}, let $W$ denote the number of pairs of rows that cannot be distinguished over the selected columns. Due to the first moment method, we have
\begin{equation}
\Prob(W>0)\leq \E(W) \leq \binom{n}{2} r^k \leq \frac{n^2r^k}{2} \le\frac{1}{2n},
\end{equation} 
where the last inequality assumes that $k\geq-3 \ln(n)/\ln(r)$. This shows the corollary.
\end{proof}

\section{Resolvability of Random Networks with Multiple Communities}

In this section, our goal is to resolve---with as few nodes as possible---$G \sim SBM(n;C,P)$ with known parameters; in particular, our analysis is no longer asymptotic.

More precisely, for a given user-defined threshold $0<\alpha<1$, we aim to prescribe $k_1,\ldots,k_c$, such that $k_i$ nodes selected uniformly at random from community $i$ resolve the multiple community graph with probability at least $(1-\alpha)$. The challenge is to meet this threshold minimizing
\[k:=\sum_{i=1}^c k_i.\] 
We emphasize that nodes in the alleged resolving set are selected at random (within each community) to have a randomized algorithm capable of handling large networks in practice.

In what follows, $G \sim SBM(n;C,P)$ and $\Abf^*$ is the modified version of the adjacency matrix of $G$ with 2's along the diagonal (see Lemma~\ref{lem:Dmin2}). Observe that while the entries in this matrix are independent, rows from different communities are not necessarily identically distributed. Furthermore, define
\[r(i,j,\ell) := P(i,\ell)P(j,\ell)+(1-P(i,\ell))(1-P(j,\ell)),\] 
i.e. the probability that $\Abf^*(u,w)=\Abf^*(v,w)$ for given but distinct nodes $u\in V_i$, $v\in V_j$, and $w \in V_{\ell}$.

In what remains of this section, $W$ denotes the number of pairs of rows in $\Abf^*$ that collide over the $k$ random columns. Then, due the first moment method and recalling that the diagonal entries of $\Abf^*$ guarantee that rows corresponding to selected columns are unique, we have:
\begin{equation}
\label{eq:monotonic_f}
\Prob(W>0)\leq \E(W) \leq f(k_1, \dots, k_c),
\end{equation}
where $f:\mathbb{N}_0^c \rightarrow \mathbb{R}$ is defined as \begin{equation}
f(\kbf):=\sum_{1\leq i \leq j \leq c} s(i,j) \prod_{\ell=1}^c r(i,j,\ell)^{\kbf_{\ell}},
\label{def:f(bfk)}
\end{equation}
with $s(i, j):=\binom{|V_i|}{2}$ when $i=j$, and $s(i, j):=|V_i||V_j|$ when $i \neq j$. We note that $f(\kbf)$ is a tight upper-bound of $\E(W)$ when $\kbf_i\ll|V_i|$ for each community $i$, which is what we usually expect.

We aim to determine $\kbf\in\NN_0^c$ that minimizes $\sum_{i=1}^c\kbf_i$ subject to $f(\kbf)\le\alpha$. A very useful property for this integer programming problem is that $f(\ybf)\leq f(\xbf)$ whenever $\xbf\leq\ybf$, i.e. when each coordinate of $\xbf$ is at least as small as the corresponding coordinate of $\ybf$. In other words, $f$ is decreasing with respect to the partial order $\le$. In particular, if $f(\ybf) > \alpha$ for certain $\ybf \in \mathbb{N}_0^c$ then all $\xbf \leq \ybf$ may be eliminated from the search space.

In what follows, we assume that $f(|V_1|,\ldots,|V_c|)\le\alpha$. This condition is necessary and sufficient for the existence of a feasible point because $f$ is decreasing.

\subsection{The MINE Algorithm} Because the function $f$ is decreasing, our integer programming problem may be alternatively phrased as determining the largest $h\ge1$ such that $f(\kbf')>\alpha$ for each $\kbf'\in S_{h-1}$. To accomplish this in an efficient manner, we require the following definitions.

\begin{definition}
For each integer $h\ge0$, define $S_h := \{\kbf\in \mathbb{N}_0^c\,|\,\sum_{i=1}^c \kbf_i = h\}$. Further, define the \underline{D}ownward operator $D : \cup_{h\ge1}S_h \rightarrow \cup_{h\ge0}S_h$, which subtracts 1 from the leftmost strictly positive entry of $\kbf \in S_h$, and the \underline{U}pward operator $U: \cup_{h\ge0}S_h \rightarrow \cup_{h\ge1}S_h$, which adds 1 to $\kbf_1$.
\end{definition}

The terminology in the above definition is motivated by the following observations: if $\xbf\in S_h$, with $h\ge1$, then $D(\xbf)\in S_{h-1}$, and if $\ybf\in S_h$, with $h\ge0$, then $U(\ybf)\in S_{h+1}$.

\begin{definition}
$\preceq$ denotes the reverse lexicographic order on $\NN_0^c$, i.e. $\xbf \preceq\ybf$ if and only if $\xbf=\ybf$, or $\xbf_i>\ybf_i$ for the smallest index $1 \leq i \leq c$ with $\xbf_i \neq \ybf_i$.
\end{definition}

The key insight behind our method is given by the following result.

\begin{lemma}
\label{lem:sbm_algo_order}
Let $h\ge1$. If $\kbf\in S_h$ and $\kbf'\in S_{h-1}$ are such that $\kbf'\preceq D(\kbf)$, then $\kbf'< U(\kbf')\preceq\kbf$.
\end{lemma}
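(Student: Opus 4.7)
The first inequality $\kbf' < U(\kbf')$ is immediate from the definition of $U$: it increments only the first coordinate of $\kbf'$ and fixes the rest, producing a vector that dominates $\kbf'$ componentwise with strict inequality at position one, hence is strictly greater in the componentwise partial order. The substantive content is $U(\kbf') \preceq \kbf$, which I will establish by a case split on whether $\kbf_1 > 0$ or $\kbf_1 = 0$; in each case the strategy is to identify the leftmost index at which $U(\kbf')$ and $\kbf$ disagree and verify that $U(\kbf')$ is strictly larger there, which is precisely what the reverse-lexicographic order $\preceq$ requires.

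Suppose first $\kbf_1 > 0$, so that $D(\kbf) = (\kbf_1 - 1, \kbf_2, \ldots, \kbf_c)$ and consequently $U(D(\kbf)) = \kbf$. If $\kbf' = D(\kbf)$, then $U(\kbf') = \kbf$ and the claim is trivial. Otherwise $\kbf' \prec D(\kbf)$ strictly, so there is a leftmost index $i^*$ with $\kbf'_{i^*} > D(\kbf)_{i^*}$. If $i^* = 1$, integrality gives $\kbf'_1 \geq \kbf_1$, hence $U(\kbf')_1 \geq \kbf_1 + 1 > \kbf_1$, so the leftmost differing index of $U(\kbf')$ and $\kbf$ is $1$ and $U(\kbf') \prec \kbf$. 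If $i^* > 1$, one checks in sequence that $U(\kbf')_1 = \kbf_1$, that $U(\kbf')_j = \kbf_j$ for $1 < j < i^*$, and that $U(\kbf')_{i^*} = \kbf'_{i^*} > D(\kbf)_{i^*} = \kbf_{i^*}$, so the leftmost differing index is $i^*$ and again $U(\kbf') \prec \kbf$.

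If instead $\kbf_1 = 0$, then $D$ acts on some coordinate $j^* > 1$, so $D(\kbf)_1 = 0$. Any $\kbf' \preceq D(\kbf)$ then satisfies $\kbf'_1 \geq 0$, so $U(\kbf')_1 \geq 1 > \kbf_1$, which immediately forces the leftmost disagreement to occur at index one with $U(\kbf')_1 > \kbf_1$, giving $U(\kbf') \prec \kbf$. The main source of friction throughout the argument is the asymmetry between $D$ and $U$: $D$ decrements whichever coordinate happens to be the leftmost positive entry, which need not be coordinate one, while $U$ always acts on coordinate one. The case split above is designed precisely to isolate this asymmetry; in the $\kbf_1 > 0$ branch the two operators act on the same coordinate and $U \circ D$ is the identity, while in the $\kbf_1 = 0$ branch the single ``extra'' unit that $U$ places at coordinate one already dominates $\kbf_1 = 0$ and renders the reverse-lex comparison immediate.
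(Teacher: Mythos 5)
Your proof is correct and follows essentially the same route as the paper's: the immediate componentwise comparison for $\kbf' < U(\kbf')$, then a case split on $\kbf_1 = 0$ versus $\kbf_1 > 0$, using in the latter case that $U(D(\kbf)) = \kbf$ and tracking the leftmost index of disagreement. The only cosmetic difference is that the paper compresses your $i^*=1$ / $i^*>1$ subcases by noting that $U$ shifts both vectors identically, so the leftmost disagreement between $U(\kbf')$ and $\kbf$ coincides with that between $\kbf'$ and $D(\kbf)$.
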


\begin{proof} 
Define $\mathbf{a}:=D(\kbf)$ and $\mathbf{b}=U(\kbf')$. Since $\kbf'_1<\mathbf{b}_1$ but $\kbf'_i=\mathbf{b}_i$ for $1<i\le c$, we have $\kbf'<\mathbf{b}$. On the other hand, if $\kbf_1 = 0$ then $\mathbf{b}_1 \geq 1$, and $\mathbf{b} \preceq \kbf$ as claimed. Instead, if $\kbf_1 > 0$ then $\kbf = U(\mathbf{a})$. Since $\kbf' \preceq \mathbf{a}$, either $\kbf' = \mathbf{a}$ or $\kbf'_i > \mathbf{a}_i$ for the smallest index $1 \leq i \leq c$ with $\kbf'_i \neq \mathbf{a}_i$. Assuming the former, $\mathbf{b}=U(\mathbf{a})=\kbf$. Assuming the latter, $i$ must also be the first index at which $\mathbf{b}_i \neq \kbf_i$ and $\mathbf{b}_i  > \kbf_i $. Thus, in either case, $\mathbf{b}\preceq \kbf$ as claimed.
\end{proof}

Lemma~\ref{lem:sbm_algo_order} allows us to use the Downward operator to reduce $h$ but without having to explore all of $S_{h-1}$ for an optimum of our integer programming problem. Indeed, \textit{suppose that for some $h\ge1$ we have found $\kbf\in S_h$ such that $f(\mathbf{a})>\alpha$, for all $\mathbf{a}\prec\kbf$ with $\mathbf{a}\in S_h$, but $f(\kbf)\le\alpha$.} Then, due to the lemma, if $\kbf'\in S_{h-1}$ and $\kbf'\prec D(\kbf)$ then $\kbf'<U(\kbf')\prec\kbf$; in particular, because $f$ is decreasing, $f(\kbf')\ge f(U(\kbf'))>\alpha$ i.e. $f(\kbf')>\alpha$. In conclusion, \textit{$D(\kbf)$ is the smallest point in $S_{h-1}$ with respect to the total order $\preceq$ that may be feasible.} 

Algorithm~\ref{sbm_algo}, or MINE (Minimizing Indistinguishable Nodes via Expectation), implements the above observation as follows.

At the onset, $\kbf$ is determined such that $f(\kbf)\le\alpha$ and $f(\kbf-\mathbf{e})>\alpha$, for each canonical vector 
$\mathbf{e}\in\mathbb{N}_0^c$ such that $(\kbf-\mathbf{e})\in\mathbb{N}_0^c$. This $\kbf$ is found using a forward greedy search (Algorithm~\ref{greedy_min_algo})---see Figure~\ref{fig:algo_ex} top left, followed by a backward greedy search (Algorithm~\ref{greedy_max_algo})---see Figure~\ref{fig:algo_ex} top right. We emphasize that the second requirement on $\kbf$ is not essential; however, it works well in practice to start the optimum search with a smaller value of $h$. 

Set $h\leftarrow\sum_{i=1}^c\kbf_i$, with $\kbf$ as above. MINE proceeds searching for feasible points in $S_{h-1}$ starting 
with $(h-1,0,\ldots,0)\in\NN_0^c$, the smallest point in $S_{h-1}$ w.r.t. the order $\preceq$. Each time a point in $S_{h-1}$ is deemed unfeasible, its successor w.r.t. $\preceq$ in $S_{h-1}$ is determined (Algorithm~\ref{next_point}), which is checked for feasibility---see Figure~\ref{fig:algo_ex} bottom left. This process continues until either, no point in $S_{h-1}$ is deemed feasible, in which case $\kbf$ is an optimum, or a point $\xbf\in S_{h-1}$ is found such that $f(\xbf)\le\alpha$, in which case we reset $h\leftarrow(h-1)$, and the search for an optimum restarts at $\kbf\leftarrow D(\xbf)$ and follows its successors in $S_{h-1}$ (always w.r.t. $\preceq$). MINE repeats the last process, incrementally reducing the value of $h$, until a successor of the last feasible $\kbf$ has no successor in $S_h$, in which case $\kbf$ is an optimum---see Figure~\ref{fig:algo_ex} bottom right.

\begin{figure}[h!] 
\centering 
\includegraphics[scale=0.65]{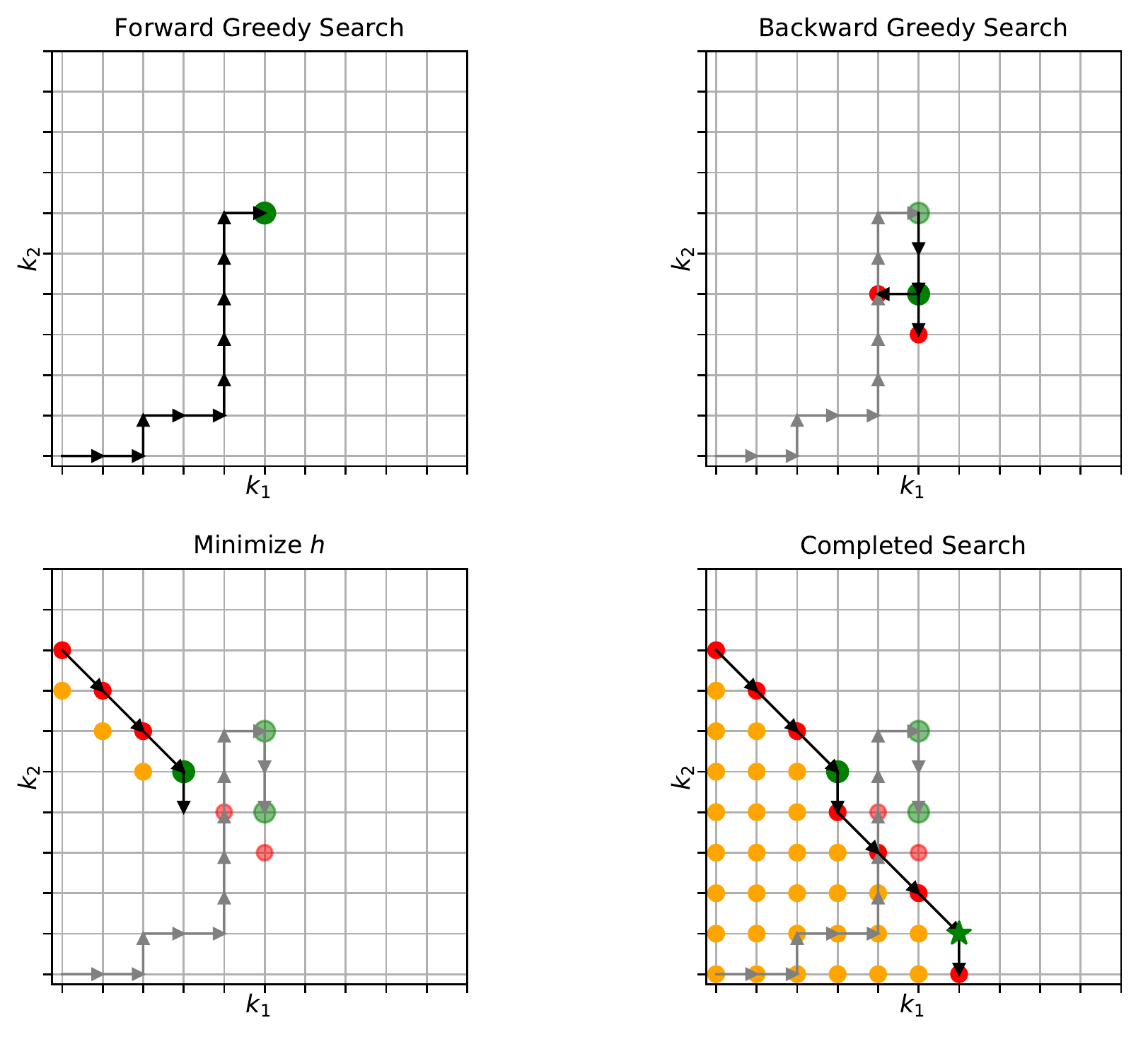} 
\caption[Lof entry]{A schematic of the main steps of the MINE algorithm with two communities. $f(\kbf) \leq \alpha$ at green points, $f(\kbf) > \alpha$ is verified explicitly at red points, $f(\kbf) > \alpha$ is implicit at orange points, and the green star represents an optimal solution $\kbf$. Black arrows record the most recent stage of the algorithm while grey arrows record older stages.} 
\label{fig:algo_ex} 
\end{figure}

\section{Illustration on Derived Synthetic Networks}

To gain an understanding of the MINE algorithm (Algorithm~\ref{sbm_algo}) in terms of both runtime and resolving set size, we applied it to synthetic networks generated via the SBM with parameters estimated from several real world networks---see Table~\ref{tab:network_descriptions} and Figure~\ref{fig:real_world_networks}. 

Community sizes were altered so that graphs of varying sizes could be considered. Specifically, we set
\[|V_i| = n\cdot\frac{|\hat{V_i}|}{\sum_{j=1}^c |\hat V_j|},\]
rounded to the nearest integer, where $\hat{V_i}$ is a labeled community in the original network. We focused on networks with $n\approx10000$ nodes so that the ICH algorithm may be run in a reasonable amount of time. Standardizing vertex labels so that communities consist of consecutively labeled nodes, we generated 30 synthetic networks for each set of parameters. 

A total of five algorithms were considered for each network---see Table~\ref{tab:algo_descriptions}. Techniques depending on selection of random nodes were applied 50 times to each network. Due to long run times, however, the ICH algorithm was only run 10 times. The time required to compute full distance information for networks half this size was already substantial (Table~\ref{tab:dist_mat_times}) and, since the fraction of shortest paths with length greater than 2 is less than $0.001$ for all networks with $5000$ nodes, except those based on the political blogs network (Table~\ref{tab:frac_long_paths}), we expect $\Abf^*=2-\Dbf$ and $\Dbf$ to be nearly identical in most cases for large networks. All results and associated code are available on GitHub (\url{https://github.com/riti4538/SBM-Metric-Dimension}).

\begin{table}
\centering 
\scriptsize
\begin{tabular}{l|ccc}
\toprule
\multicolumn{1}{>{\centering\arraybackslash}m{28mm}}{Network} & \multicolumn{1}{>{\centering\arraybackslash}m{18mm}}{Communities} & \multicolumn{1}{>{\centering\arraybackslash}m{28mm}}{Size (community sizes)} & \multicolumn{1}{>{\centering\arraybackslash}m{30mm}}{$P$} \\
\midrule
Political Blogs~\cite{adamic2005political} & 2 & 1222 $(586, 636)$ & $\begin{bmatrix} 0.043 & 0.004 \\ 0.004 & 0.039 \end{bmatrix}$ \\[5mm]
Political Books~\cite{krebspoliticalbooks} & 3 & 105 $(49, 43, 13)$ & $\begin{bmatrix} 0.162 & 0.006 & 0.053 \\ 0.005 & 0.190 & 0.043 \\ 0.053 & 0.043 & 0.115 \end{bmatrix}$ \\[5mm]
Zachary's Karate Club~\cite{zachary1977information} & 2 & 34 $(17, 17)$ & $\begin{bmatrix} 0.257 & 0.038 \\ 0.038 & 0.228 \end{bmatrix}$ \\[5mm]
\emph{David Copperfield}~\cite{newman2006finding} & 2 & 112 $(58, 54)$ & $\begin{bmatrix} 0.063 & 0.098 \\ 0.098 & 0.010 \end{bmatrix}$ \\[5mm] 
Primary School~\cite{stehle2011high} & 3 & 236 $(110, 112, 14)$ & $\begin{bmatrix} 0.198 & 0.204 & 0.160 \\ 0.204 & 0.268 & 0.166 \\ 0.160 & 0.166 & 0.297 \end{bmatrix}$ \\
\bottomrule
\end{tabular}
\caption{Descriptions of real world networks.}
\label{tab:network_descriptions}
\end{table}

\begin{figure}[h!]
\centering
\begin{subfigure}[t]{0.49\textwidth}
    \centering
    \includegraphics[scale=0.23]{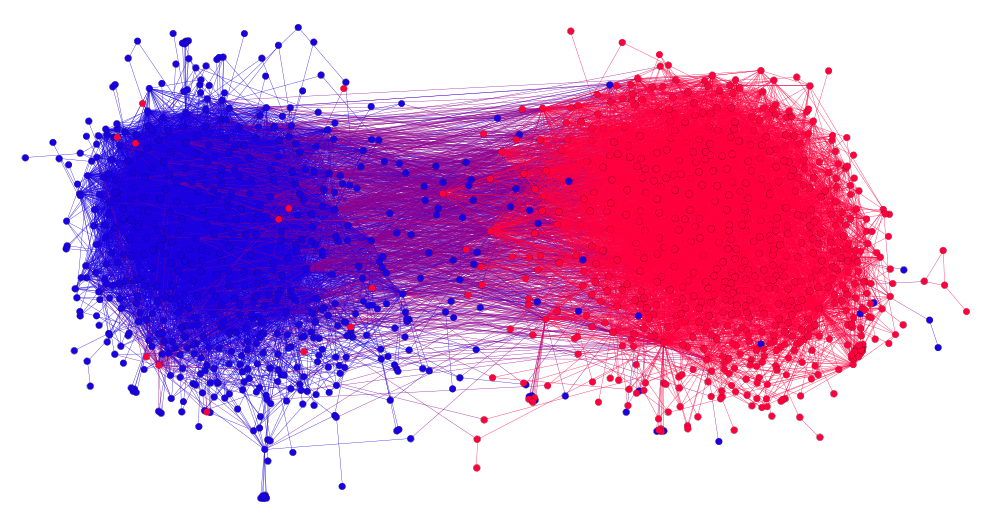}
    \caption{Political blogs}\label{fig:image11}
\end{subfigure}
    \hfill
\begin{subfigure}[t]{0.49\textwidth}
    \centering
    \includegraphics[scale=0.23]{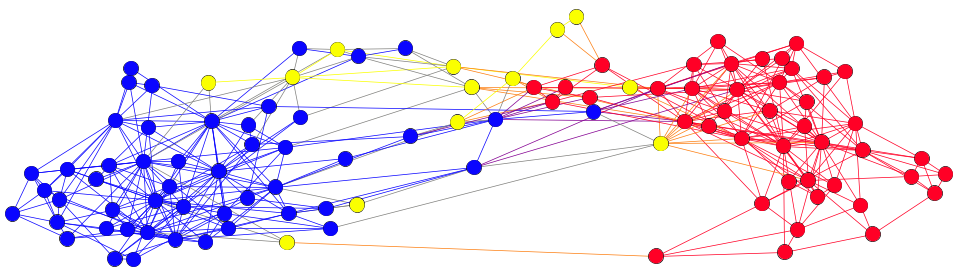}
    \caption{Political books}\label{fig:image12}
\end{subfigure}
\begin{subfigure}[t]{0.49\textwidth}
  \centering
  \includegraphics[scale=0.23]{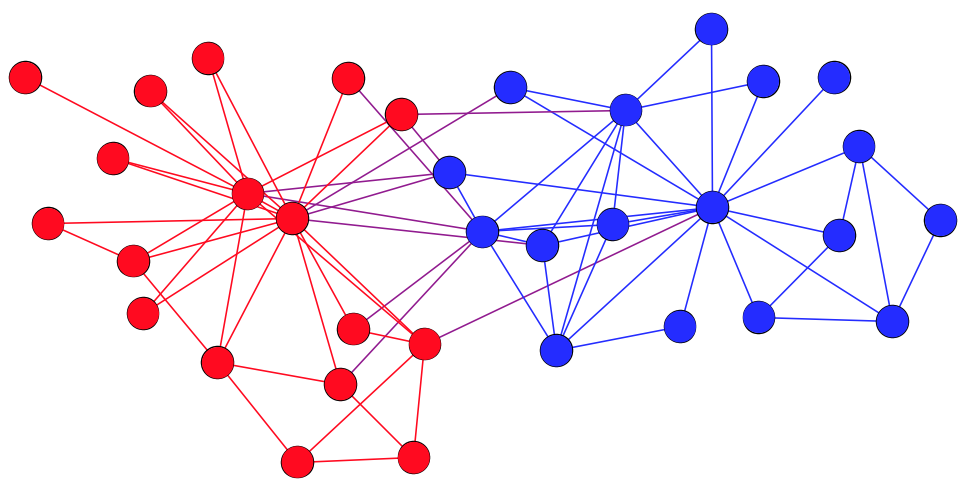}
  \caption{Karate Club}\label{fig:image21}
\end{subfigure}
    \hfill
\begin{subfigure}[t]{0.49\textwidth}
    \centering
    \includegraphics[scale=0.23]{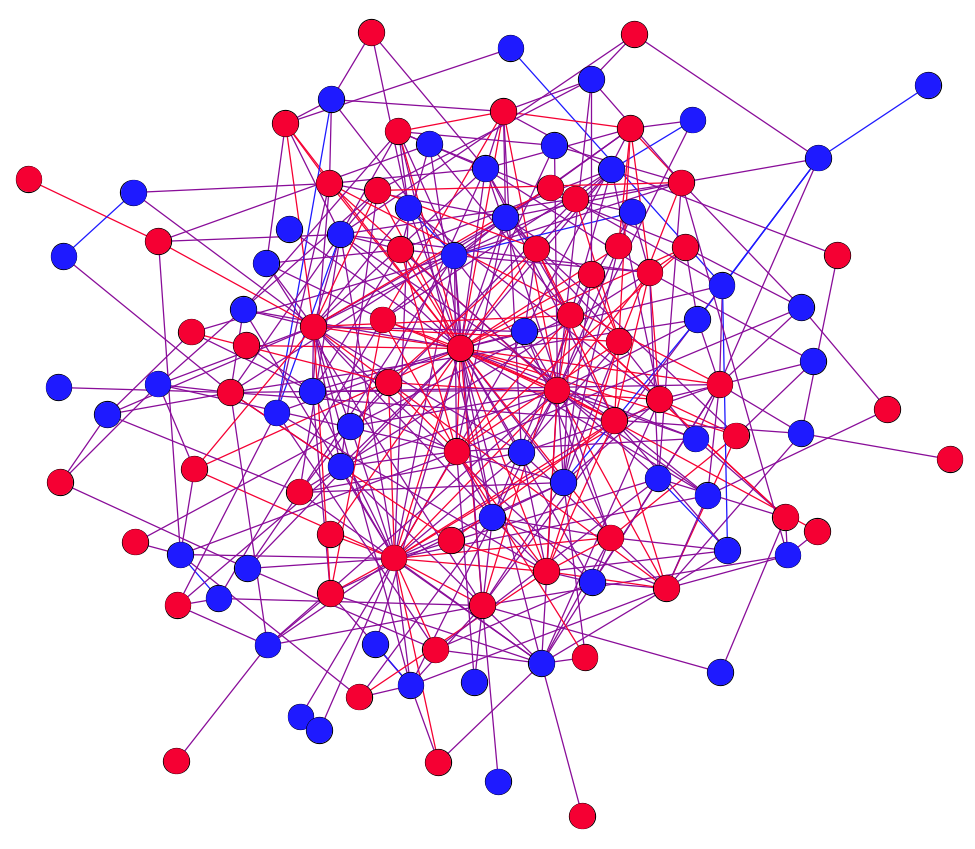}
    \caption{\emph{David Copperfield}}\label{fig:image22}
\end{subfigure}
\begin{subfigure}{0.49\textwidth}
  \centering
  \includegraphics[scale=0.23]{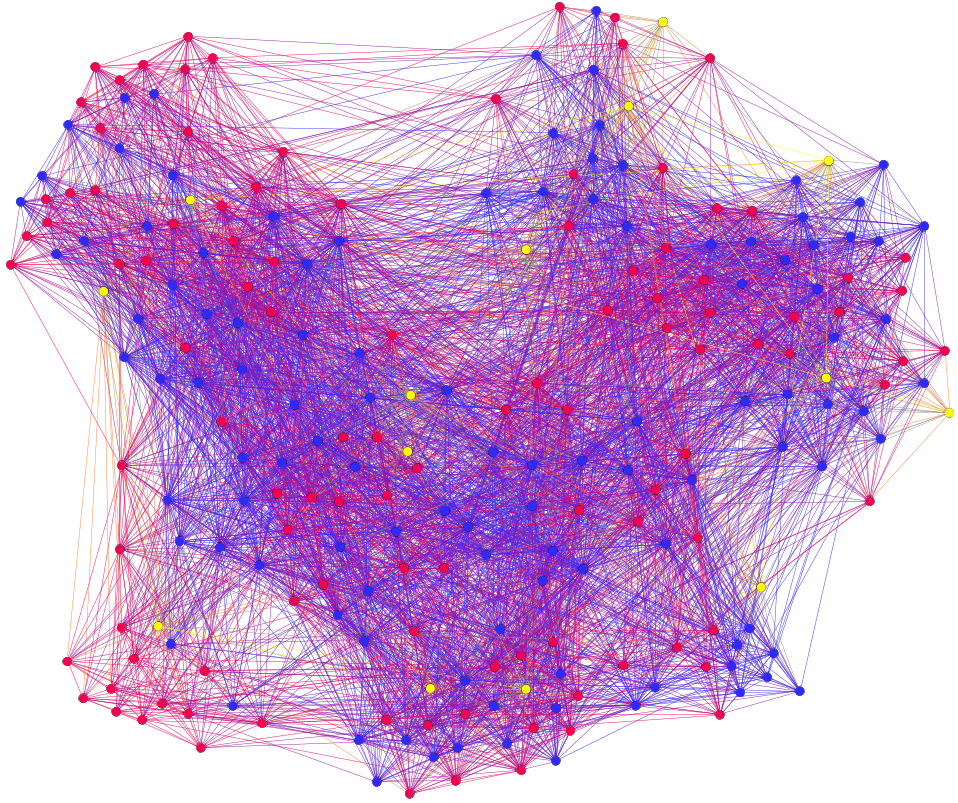}
  \caption{Primary school}\label{fig:image3}
\end{subfigure}
\caption{Visualizations of real world networks with nodes colored according to community membership (see Table~\ref{tab:network_descriptions}) generated with Gephi~\cite{bastian2009gephi}.}
\label{fig:real_world_networks}
\end{figure}

\begin{table}
\centering 
\scriptsize
\begin{tabular}{l|p{5cm}c}
\toprule
\multicolumn{1}{>{\centering\arraybackslash}m{23mm}}{Algorithm} & \multicolumn{1}{>{\centering\arraybackslash}m{50mm}}{Description} & \multicolumn{1}{>{\centering\arraybackslash}m{28mm}}{Randomized} \\
\midrule
MINE & Minimizing Indistinguishable Nodes via Expectation (Algorithm~\ref{sbm_algo}) & No \\[8mm]
ICH & Use the Information Content Heuristic to find a resolving set~\cite{hauptmann2012approximation} & Yes \\[8mm]
Greedy & Iteratively select nodes from communities according to Algorithm~\ref{greedy_min_algo} & Yes \\[8mm]
Preorder & Order nodes within communities based on the function $f(\cdot)$ (equation (\ref{eq:monotonic_f})). Consider communities based on the order prescribed by Algorithm~\ref{greedy_min_algo} & No \\[12mm]
Random & Iteratively select nodes uniformly at random from the full graph & Yes \\
\bottomrule
\end{tabular}
\caption{Descriptions of algorithms.}
\label{tab:algo_descriptions}
\end{table}

\begin{table}[h]
\scriptsize
\centering
\begin{tabular}{cccccc}
\toprule
Network & Distance Matrix Construction Time \\
\midrule
Political Blogs & $31095.5 \pm 212.12$ \\
Political Books & $30851.17 \pm 293.2$ \\
Karate Club & $30134.45 \pm 251.75$ \\
\emph{David Copperfield} & $29950.99 \pm 198.65$ \\
Primary School & $30137.52 \pm 256.05$ \\
\bottomrule
\end{tabular}
\caption[LoF entry]{Time in seconds required to generate full distance matrices for networks of size $n=5000$ using the NetworkX Floyd-Warshall implementation~\cite{hagberg2008exploring}.}
\label{tab:dist_mat_times}
\end{table}

\begin{table}[h]
\centering
\scriptsize
\begin{tabular}{cc}
\toprule
Network & Fraction of Paths with Length $>2$ \\ 
\midrule
Political Blogs & $0.216$ \\ 
Political Books & $0.001$ \\ 
Karate Club & $5.770 \times 10^{-10}$ \\ 
\emph{David Copperfield} & $1.538 \times 10^{-18}$ \\ 
Primary School & $3.095 \times 10^{-12}$ \\ 
\bottomrule
\end{tabular}
\caption[LoF entry]{Fraction of shortest path lengths greater than 2 across all synthetic networks of size $n=5000$ for each set of real world parameters.}
\label{tab:frac_long_paths}
\end{table}

For threshold values $\alpha \in \{0.005, 0.01, 0.1, 0.2\}$, MINE is very fast. Resolving sets for networks with $\sim 10000$ nodes are found in less than 3 seconds (Table~\ref{tab:algo_adj_time}). The difference in run times as $\alpha$ decreases, that is as we become more strict regarding our certainty that a resolving set will be found, is small but generally increasing. The change in resolving set sizes as $\alpha$ decreases is more pronounced (Table~\ref{tab:algo_adj_res_set}). In particular, a larger subset of nodes is required to guarantee resolvability when $\alpha$ is smaller. The difference, however, between the number of nodes required when $\alpha$ is $0.005$ versus $0.2$ is comparatively small, less than $1\%$ the size of the full network. So, while considering a set with a $20\%$ probability of failing to resolve a network seems impractical, the distinction between such a set and a set with only a $0.5\%$ probability of failing to resolve a network translates into a small percentage of the network size.

Overall, the size of resolving sets found with MINE depends substantially on $P$. For instance, networks based on the political blogs data set are relatively sparse and require more nodes to uniquely identify all nodes with high probability.

\begin{table}[h]
\scriptsize
\centering
\begin{tabular}{ccccc}
\toprule
Network & $\alpha = 0.005$ & $\alpha = 0.01$ & $\alpha = 0.1$ & $\alpha = 0.2$\\ 
\midrule
Political Blogs & $2.9 \pm 0.25$ & $2.87 \pm 0.28$ & $2.58 \pm 0.2$ & $2.55 \pm 0.24$ \\ 
Political Books & $2.5 \pm 0.06$ & $2.41 \pm 0.12$ & $2.07 \pm 0.14$ & $1.97 \pm 0.18$ \\ 
Karate Club & $0.66 \pm 0.07$ & $0.64 \pm 0.08$ & $0.6 \pm 0.14$ & $0.59 \pm 0.13$ \\ 
\emph{David Copperfield} & $1.04 \pm 0.06$ & $1.04 \pm 0.08$ & $0.94 \pm 0.05$ & $0.93 \pm 0.08$ \\ 
Primary School & $0.72 \pm 0.04$ & $0.7 \pm 0.1$ & $0.59 \pm 0.05$ & $0.57 \pm 0.08$ \\ 
\bottomrule
\end{tabular}
\caption[LoF entry]{Time in seconds required to determine highly likely resolving sets using MINE ($n=10000$).}
\label{tab:algo_adj_time}
\end{table}

\begin{table}[h]
\scriptsize
\centering
\begin{tabular}{ccccc}
\toprule
Network & $\alpha = 0.005$ & $\alpha = 0.01$ & $\alpha = 0.1$ & $\alpha = 0.2$\\ 
\midrule
Political Blogs & $506.73 \pm 0.51$ & $491.43 \pm 0.62$ & $440.87 \pm 0.43$ & $425.53 \pm 0.5$ \\ 
Political Books & $151.0 \pm 0.0$ & $146.0 \pm 0.0$ & $131.0 \pm 0.0$ & $126.0 \pm 0.0$ \\ 
Karate Club & $84.97 \pm 0.18$ & $82.0 \pm 0.0$ & $73.0 \pm 0.0$ & $71.0 \pm 0.0$ \\ 
\emph{David Copperfield} & $157.03 \pm 0.18$ & $152.33 \pm 0.47$ & $137.0 \pm 0.0$ & $132.0 \pm 0.0$ \\ 
Primary School & $58.0 \pm 0.0$ & $56.0 \pm 0.0$ & $50.0 \pm 0.0$ & $48.0 \pm 0.0$ \\ 
\bottomrule
\end{tabular}
\caption[LoF entry]{Resolving set sizes determined using MINE ($n=10000$).}
\label{tab:algo_adj_res_set}
\end{table}

The ICH algorithm, on the other hand, is extremely slow taking between 2700 and 20000 seconds, depending on the underlying network, to find resolving sets on networks with 10000 nodes (Table~\ref{tab:adj_time}). While full distance information may improve the results of the ICH algorithm in some circumstances, the time required to generate $\Dbf$ would be substantial (Table~\ref{tab:dist_mat_times}). Nevertheless, the ICH algorithm is able to find very small resolving sets, in some cases less than half the size as those discovered using MINE (Table~\ref{tab:algo_adj_res_set}).

The discrepancy between resolving set sizes discovered with these two algorithms can be partially explained by the generality of MINE. The ICH algorithm focuses on the structure of a single network, taking advantage of specific relationships between nodes whereas MINE applies to an ensemble of graphs. In fact, its output is not a set of nodes but a number of nodes to choose from each community. This output can be used to generate highly likely resolving sets for any graph in the ensemble $SBM(n;C,P)$. This allows MINE to be very fast even for large networks. However, in order to accommodate the full ensemble $SBM(n;C,P)$, resulting resolving sets might be larger than those found by examining specific graph instances.

\begin{table}[h]
\scriptsize
\centering
\begin{tabular}{ccccc}
\toprule
Network & ICH & Greedy & Preorder & Random\\ 
\midrule
Political Blogs & $20182.13 \pm 886.16$ & $40.71 \pm 23.07$ & $67.08 \pm 17.88$ & $43.19 \pm 26.64$ \\ 
Political Books & $6073.3 \pm 288.33$ & $5.11 \pm 2.74$ & $51.79 \pm 2.1$ & $8.65 \pm 4.45$ \\ 
Karate Club & $4021.19 \pm 205.36$ & $1.43 \pm 0.71$ & $39.22 \pm 1.34$ & $2.03 \pm 1.23$ \\ 
\emph{David Copperfield} & $6362.8 \pm 312.41$ & $3.83 \pm 2.14$ & $39.39 \pm 1.95$ & $13.7 \pm 5.38$ \\ 
Primary School & $2709.34 \pm 310.22$ & $0.73 \pm 0.37$ & $56.65 \pm 2.06$ & $0.76 \pm 0.4$ \\ 
\bottomrule
\end{tabular}
\caption[LoF entry]{Time in seconds required to find resolving sets using only adjacency information ($n=10000$).}
\label{tab:adj_time}
\end{table}

\begin{table}[h]
\scriptsize
\centering
\begin{tabular}{ccccc}
\toprule
Network & ICH & Greedy & Preorder & Random\\ 
\midrule
Political Blogs & $184.66 \pm 0.91$ & $393.09 \pm 31.99$ & $345.87 \pm 27.15$ & $396.03 \pm 33.25$ \\ 
Political Books & $71.52 \pm 0.67$ & $118.41 \pm 9.39$ & $108.07 \pm 8.47$ & $126.34 \pm 11.75$ \\ 
Karate Club & $45.98 \pm 0.34$ & $66.8 \pm 4.7$ & $63.4 \pm 3.81$ & $69.98 \pm 6.19$ \\ 
\emph{David Copperfield} & $73.62 \pm 0.66$ & $123.73 \pm 9.64$ & $110.47 \pm 7.48$ & $151.41 \pm 14.98$ \\ 
Primary School & $32.3 \pm 0.46$ & $45.64 \pm 3.47$ & $42.53 \pm 2.53$ & $46.65 \pm 3.38$ \\ 
\bottomrule
\end{tabular}
\caption[LoF entry]{Resolving set sizes determined using only adjacency information ($n=10000$).}
\label{tab:adj_res_set}
\end{table}

The ideas underlying MINE can be used as the basis for a variety of other methods for finding resolving sets in $G \sim SBM(n;C,P)$. Several such approaches were implemented and tested alongside MINE and the ICH algorithm---see Table~\ref{tab:algo_descriptions}. 

For instance, the forward greedy search described in Algorithm~\ref{greedy_min_algo} can be used to iteratively add nodes to a growing set until it is resolving. Since this approach checks the resolvability of the set after the addition of each node, it requires substantially more time to find a suitable set than MINE. It is, however, far faster than the ICH algorithm, especially for large networks (Table~\ref{tab:adj_time}). In addition, resolving sets discovered in this way are smaller than those discovered by MINE but larger than those discovered with the ICH algorithm (Table~\ref{tab:adj_res_set}).

The greedy approach and MINE select a designated number of nodes uniformly at random within each community. Since the adjacencies of nodes from the same community come from the same distribution in the SBM, picking randomly within communities seems reasonable. However, because we are interested in minimizing the function $f$ in equations~(\ref{eq:monotonic_f})-(\ref{def:f(bfk)}), preemptively ordering nodes within communities based on this function may result in smaller resolving sets. Therefore, we also considered a preorder algorithm following this node selection strategy. Though this order is not updated as new nodes are added to a growing resolving set, initially sorting the nodes according to $f$ does slow the algorithm down (Table~\ref{tab:adj_time}). Taking advantage of small differences in nodes with respect to $f$ also usually results in the discovery of resolving sets slightly smaller than those discovered with a purely greedy method (Table~\ref{tab:adj_res_set}).

Finally, a fully randomized algorithm was tested. In particular, nodes were chosen uniformly at random and added to a growing set until a resolving set was generated. This strategy is comparable to but slightly slower than the greedy approach and results in slightly larger resolving sets (Table~\ref{tab:adj_res_set}). This similarity in performance is not surprising given how alike communities in the studied networks are with respect to size and adjacency probabilities. Consider, however, a graph $G \sim SBM(10000;C,P)$ with two communities of the same size and 

\[P = 
\begin{bmatrix}
    0.5 & 0.1 \\
    0.1 & 0.01
\end{bmatrix}.
\]

\noindent In this case, the greedy approach focuses on community one, taking $6.58 \pm 1.52$ seconds and finding resolving sets of size $85.17 \pm 6.81$ over 30 synthetic network instances and 50 replicates on each instance. The random strategy takes approximately the same number of nodes from each community. Since nodes from community two are not as helpful in uniquely distinguishing nodes, more time is taken ($20.86 \pm 5.53$ seconds) in generating larger resolving sets ($154.34 \pm 16.02$).

Summarizing, we have established very general regimes for which each of these algorithms work well. For small networks, the ICH algorithm is efficient enough to produce nearly minimal resolving sets in a reasonable amount of time. As network size increases and the ICH algorithm becomes prohibitively slow, new approaches for finding resolving sets must be applied. When $G \sim SBM(n;C,P)$, an iterative greedy method based on Algorithm~\ref{greedy_min_algo} and a strategy based on preordering the nodes of each community are fast and provide small resolving sets. For extremely large networks $G \sim SBM(n;C,P)$ with known parameters $C$ and $P$ and for which repeatedly accessing adjacency or distance information may be time consuming, MINE can very quickly determine a number of nodes to pick from each community to produce a resolving set with high probability. It is worth noting that the efficiency of MINE depends on the number of communities in the network. While this method is asymptotically faster than the ICH algorithm, it may be slower for certain values of $n$ and $c$.

\begin{algorithm}
\caption{Minimizing Indistinguishable Nodes via Expectation (MINE)}
\label{sbm_algo}
\begin{algorithmic}[1]
\STATE{Input: $c$, the number of dimensions}
\STATE{\hspace{1.04cm}$\alpha$, a threshold on values of $f$ to consider}
\STATE{Output: a point $\kbf \in \mathbb{N}_0^c$ such that $f(\mathbb{k}) \leq \alpha$ and $\sum_{i=1}^c \kbf_i$ is minimized.}
\STATE{$\xbf \gets ForwardGreedySearch(c, f, \alpha)$}
\STATE{$\xbf \gets BackwardGreedySearch(c, f, \alpha, \xbf)$}
\STATE{$\ybf \gets ((\sum_{i=1}^c \xbf_i)-1, 0, \dots, 0)$}
\WHILE{$\ybf \neq \{\}$}
\IF{$f(\ybf) \leq \alpha$}
\STATE{$\xbf \gets \ybf$}
\STATE{$\ybf \gets D(\ybf)$}
\ELSE
\STATE{$\ybf \gets NextPoint(c, \ybf)$}
\ENDIF
\ENDWHILE
\RETURN{$\xbf$}
\end{algorithmic}
\end{algorithm}

\begin{algorithm}
\caption{Forward Greedy Search}
\label{greedy_min_algo}
\begin{algorithmic}[1]
\STATE{Input: $c$, the number of dimensions}
\STATE{\hspace{1.04cm}$f$, the function to be minimized over $\mathbb{N}_0^c$}
\STATE{\hspace{1.04cm}$\alpha$, a threshold on values of $f$ to consider}
\STATE{Output: a point in $\mathbb{N}_0^c$ found via a greedy search to minimize $f$}
\STATE{$\xbf \gets (0, 0, \dots, 0)$}
\WHILE{$f(\xbf) > \alpha$}
\STATE{$\ybf \gets \xbf$}
\FOR{$i \in \{1, 2, \dots, c\}$}
\STATE{$\mathbf{e} = (0, 0, \dots, 0)$}
\STATE{$\mathbf{e}_i = 1$}
\IF{$f(\xbf+\mathbf{e}) < f(\ybf)$}
\STATE{$\ybf \gets \xbf+\mathbf{e}$}
\ENDIF
\ENDFOR
\STATE{$\xbf \gets \ybf$}
\ENDWHILE
\RETURN{$\xbf$}
\end{algorithmic}
\end{algorithm}

\begin{algorithm}
\caption{Backward Greedy Search}
\label{greedy_max_algo}
\begin{algorithmic}[1]
\STATE{Input: $c$, the number of dimensions}
\STATE{\hspace{1.04cm}$f$, the function to be maximized}
\STATE{\hspace{1.04cm}$\alpha$, a threshold on values of $f$ to consider}
\STATE{\hspace{1.04cm}$\xbf$, the starting point}
\STATE{Output: a point in $\mathbb{N}_0^c$ found via a backward greedy search to maximize $f$}
\STATE{$cont \gets true$}
\WHILE{$cont$}
\STATE{$cont \gets false$}
\STATE{$\ybf \gets \xbf$}
\FOR{$i \in \{1, 2, \dots, c\}$}
\STATE{$\mathbf{e} = (0, 0, \dots, 0)$}
\STATE{$\mathbf{e}_i = 1$}
\IF{$f(\ybf) < f(\xbf-\mathbf{e}) \leq \alpha$}
\STATE{$\ybf \gets \xbf-\mathbf{e}$}
\STATE{$cont \gets true$}
\ENDIF
\ENDFOR
\STATE{$\xbf \gets \ybf$}
\ENDWHILE
\RETURN{$\xbf$}
\end{algorithmic}
\end{algorithm}

\begin{algorithm}
\caption{Next Point ($\preceq$ Successor)}
\label{next_point}
\begin{algorithmic}[1]
\STATE{Input: $c$, the number of dimensions}
\STATE{\hspace{1.04cm}$\xbf \in S_h$}
\STATE{Output: the successor $\mathbf{n} \in S_h$ of $\xbf$ with respect to $\preceq$}
\IF{$\xbf_1 = \dots = \xbf_{c-1} = 0$}
\RETURN{$\{\}$ \textbackslash\textbackslash return the empty set if $\xbf$ has no successor in $S_h$}
\ENDIF
\STATE{$i = c-1$}
\WHILE{$\xbf_i=0$}
\STATE{$i \gets i-1$}
\ENDWHILE
\STATE{$\mathbf{n} \gets \xbf$}
\STATE{$\mathbf{n}_i \gets \xbf_i-1$}
\IF{$\xbf_c = 0$}
\STATE{$\mathbf{n}_{i+1} \gets \xbf_{i+1}+1$}
\ELSE
\STATE{$\mathbf{n}_{i+1} \gets \xbf_c + 1$}
\ENDIF
\STATE{$i \gets i+2$}
\WHILE{$i \leq c$}
\STATE{$\mathbf{n}_i \gets 0$}
\STATE{$i \gets i+1$}
\ENDWHILE
\RETURN{$\mathbf{n}$}
\end{algorithmic}
\end{algorithm}

\section{Conclusion and Future Directions}

The metric dimension of a graph is defined by pairwise distances between nodes. Dependencies among these values often make it difficult to characterize the metric dimension of random graph ensembles, including the SBM, precisely. We have shown, however, that adjacency information alone is enough to determine an upper bound on the metric dimension of any graph (Lemma~\ref{lem:Dmin2}). Furthermore, this bound is tight when $\diam(G) \leq 2$ (Corollary~\ref{cor:DA*}), a property that persists with high probability across a wide range of parameter values for the SBM. Taking advantage of this observation, we have described an algorithm (MINE) capable of finding small, highly likely resolving sets for a considerable fraction of graphs in the SBM ensemble with known parameters $C$ and $P$. This algorithm is efficient and has a much wider range of network sizes on which it is applicable as compared to the ICH algorithm.

Going forward, this algorithm might be used to discover resolving sets in networks with community structure in order to efficiently embed these kinds of networks in real space~\cite{tillquist2019low}. Such embeddings could be provided as input to machine learning and analysis algorithms to better study the processes underlying the formation of communities.

Furthermore, a similar approach to analyzing metric dimension may prove fruitful for other random graph ensembles such as those defined by a preferential attachment mechanism~\cite{barabasi1999emergence,price1976general,simon1955class}.

\section{Acknowledgements}

The authors thank Rafael Frongillo for his perceptive observations and useful suggestions.

This research was partially funded by the NSF IIS grant 1836914. The authors acknowledge the BioFrontiers Computing Core at the University of Colorado - Boulder for providing High-Performance Computing resources (fund\-ed by the NIH grant \\ 1S10OD012300), supported by BioFrontiers IT group.

\bibliographystyle{siamplain}

\end{document}